\DeclareMathOperator{\walk}{walk}
\newcommand{\alg}[1]{\mathbf{#1}}
\newcommand{\relstr}[1]{\mathbb{#1}}
\newcommand{\tuple}[1]{\mathbf{#1}}
\renewcommand{\alph}{\mathcal A}
\def\centerby#1#2{%
  \setbox0=\hbox{#1}%
  \hbox to \wd0{\hss#2\hss}%
}
\newcommand{\redgev}{\mathrel{\rightarrow}}
\newcommand{\defeq}{\buildrel \rm def\over =}
\newcommand{\Z}{\mathbb Z}
\newcommand{\relpow}[1]{^{\circ #1}}
\theoremstyle{plain}
\newtheorem{theorem}{Theorem}[section]
\newtheorem{proposition}[theorem]{Proposition}
\newtheorem{corollary}[theorem]{Corollary}
\newtheorem{question}[theorem]{Question}
\newtheorem{definition}[theorem]{Definition}
\newtheorem{claim}[theorem]{Claim}
\newtheorem{lemma}[theorem]{Lemma}
\begin{document}

\title{Local loop lemma}
\author{Miroslav Ol\v s\'ak}
\maketitle

\begin{abstract}
  We prove that an idempotent operation generates a loop from a
  strongly connected digraph containing directed cycles of all lengths under very mild
  (local) algebraic assumptions. Using the result, we reprove the existence of a
  weakest non-trivial idempotent equations, and that a strongly connected
  digraph with algebraic length 1 compatible with a Taylor term has a loop.
\end{abstract}

\section{Introduction}

Theorems that give a loop in a graph under certain algebraic and
structural assumptions play an important role in universal algebra and
constraint satisfaction problem. One example of such a ``loop lemma''
is the following one.

\begin{theorem} [loop lemma]
  \label{dir-loop-lemma}
  \cite{BartoKozikLoop, BartoKozikCyclic}
  If a finite digraph $\relstr G$
  \begin{itemize}
  \item is weakly connected,
  \item is smooth (has no sources and no sinks),
  \item has algebraic length 1 (cannot be homomorphically mapped to a non-trivial
    directed cycle) and
  \item is compatible with a Taylor term,
  \end{itemize}
  then $\relstr G$ contains a loop.
\end{theorem}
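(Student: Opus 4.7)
The plan is to derive Theorem~\ref{dir-loop-lemma} from the intermediate statement flagged in the abstract: that a finite \emph{strongly} connected digraph with algebraic length~$1$ compatible with a Taylor term has a loop. That statement will be reproved via the paper's main local loop lemma, so I take it as given; the work here is purely combinatorial, namely to bridge from weak connectivity plus smoothness to the strongly connected setting without losing either algebraic length~$1$ or Taylor compatibility.

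First I would use smoothness of $\relstr G$ to note that every vertex lies on a directed cycle, hence every strongly connected component is itself smooth. Pick a \emph{terminal} strongly connected component $\relstr H$, one whose vertices have all their out-neighbours inside it. Then $\relstr H$ is an induced subdigraph of $\relstr G$, is smooth, and is closed under the Taylor term; a loop in $\relstr H$ is a loop in $\relstr G$. So it suffices to find such an $\relstr H$ with algebraic length~$1$.

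This is the crux of the reduction. Suppose for contradiction that every terminal strongly connected component has algebraic length divisible by a fixed prime~$p$. Each such component then admits a homomorphism to the directed $p$-cycle $\Z/p\Z$. I would propagate these labellings backwards along edges of $\relstr G$: smoothness guarantees every vertex sits on a directed walk into some terminal component, and each such walk prescribes a label modulo~$p$. Weak connectivity allows one to glue the labellings across different terminal components. Consistency of the resulting partial function --- that different walks from a vertex into labelled components prescribe the same label --- is exactly the assertion that $\relstr G$ admits a homomorphism to $\Z/p\Z$, which contradicts algebraic length~$1$. Hence some terminal $\relstr H$ has algebraic length~$1$, and the intermediate statement finishes the proof.

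The main obstacle I expect is precisely this consistency check: one has to argue that any two walks in $\relstr G$ joining the same pair of vertices and ending in labelled components have the same algebraic length modulo~$p$. This is morally the definition of $\relstr G$ having algebraic length~$1$, but packaging it cleanly --- for instance by first passing to the quotient of $\relstr G$ under the equivalence ``connected by a balanced walk'' and showing its algebraic length divides those of the terminal components --- is where a careful write-up would spend its effort. Everything else in the reduction (smoothness of components, closure under the Taylor term, pullback of loops) is routine bookkeeping.
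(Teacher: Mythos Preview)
The paper does not actually prove Theorem~\ref{dir-loop-lemma}. It is stated in the introduction with citations to \cite{BartoKozikLoop,BartoKozikCyclic} as a known result, and the outline says explicitly that Section~\ref{taylor-sec} yields only ``a version of Theorem~\ref{dir-loop-lemma} with slightly stronger relational assumption (strongly connected digraph).'' The strongly connected case is the unlabelled Corollary at the end of Section~\ref{taylor-sec}; the full weakly connected statement is never proved in this paper. So there is no proof here to compare your reduction against.

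Your reduction itself has a genuine gap. You try to show, using only weak connectivity, smoothness, and algebraic length~$1$, that some terminal strongly connected component has algebraic length~$1$. This is false. Take $V=\{a,b,v,c,d,e\}$ with edges $a\to b$, $b\to a$, $b\to v$, $v\to a$, $v\to c$, $c\to d$, $d\to e$, $e\to c$. This digraph is smooth and weakly connected, and it has algebraic length~$1$ since it contains the $2$-cycle $a\to b\to a$ (so no homomorphism to any $\Z/n\Z$ with $n>2$) while the $2$-colouring is killed by the $3$-cycle $a\to b\to v\to a$. Its unique terminal strongly connected component is the $3$-cycle $\{c,d,e\}$, of algebraic length~$3$. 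Your propagation of a $\Z/3\Z$-labelling backwards from $\{c,d,e\}$ is blocked precisely by that $2$-cycle in the non-terminal part, and the inconsistency does not hand you a terminal component of algebraic length~$1$ --- none exists. Separately, ``every terminal component has algebraic length divisible by a fixed prime~$p$'' is not the negation of ``some terminal component has algebraic length~$1$'': distinct terminal components could have pairwise coprime algebraic lengths all exceeding~$1$. And your claim that a terminal component is automatically closed under the Taylor term is unjustified; Proposition~\ref{dir-component-closed} gives closure only under the hypothesis of algebraic length~$1$, which is exactly what you are trying to establish. The actual passage from weakly to strongly connected in \cite{BartoKozikLoop,BartoKozikCyclic} uses the Taylor hypothesis in the reduction itself, not merely at the endgame.
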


The consequences of this loop lemma include the following.
\begin{itemize}
\item \cite{BartoKozikLoop} If a digraph $\relstr G$ has no sources and no sinks, and
$\relstr G$ has a component that cannot be homomorphically mapped to a
circle, then constraint satisfaction problem over $\relstr G$ is NP-complete. This was a positive answer to an influential the Hell-Ne\v set\v ril conjecture~\cite{HellNesetril} in the domain of computational complexity.
\item \cite{OptimalStrong} Every locally-finite Taylor algebra has a term operation $s$ satisfying $s(r,a,r,e) = s(a,r,e,a)$. Taylor varieties are essential in universal algebra, especially in tame congruence theory and Maltsev conditions. The fact that locally finite Taylor algebras can be characterized by such a simple condition was utterly unexpected in universal algebra, and a similar condition was later found for infinite Taylor algebras~\cite{WeakestIdempotent}.
\item \cite{BartoKozikCyclic} Every finite Taylor algebra
  $\alg A$ has cyclic terms of all prime number arities bigger than
  $|\alg A|$. This, not so direct, application of loop lemma ranks
  among the strongest characterizations of finite Taylor
  algebras.
\end{itemize}

The modern proof~\cite{BartoKozikCyclic} of the loop lemma above requires idempotency and is based on absorption. An operation $f$ is said to be idempotent if
$f(x,x,\ldots,x)=x$ for any $x$. The definition of absorption is
slightly more complex.
Let $A$ be a set, $X,Y$ subsets of $A$, and $f$ be an $n$-ary
operation on $A$. We say that $X$ \emph{absorbs} $Y$ with respect to
$f$ if for any coordinate  $i=0,\ldots,n-1$ and any elements
$x_0,x_1,\ldots,x_{i-1},y,x_{i+1},\ldots,x_{n-1}\in A$
such that $y\in Y$ and each $x_j\in X$, we have
$$
t(x_0,x_1,\ldots,x_{i-1},y,x_{i+1},\ldots,x_{n-1})\in X.
$$

Another loop lemma based on absorption, which was used for the
proof that there are the weakest non-trivial idempotent
equations~\cite{WeakestIdempotent} and which drops the finiteness
assumption, has the following form.
\begin{theorem}
\label{absorbtion-loop-lemma}
Let $\relstr G$ be an undirected, not necessarily finite graph that
contains an odd cycle and is compatible with an idempotent
operation $f$. Assume that for every non-isolated node
$x\in\relstr G$, the set of neighbors of $x$ absorbs $\{x\}$ with
respect to $f$. Then $\relstr G$ has a loop.
\end{theorem}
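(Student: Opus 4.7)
The plan is to argue by contradiction using a shortest odd closed walk. Suppose $\relstr{G}$ has no loop and let $W=(v_0,v_1,\ldots,v_\ell=v_0)$ be a shortest odd closed walk, so $\ell\geq 3$. Each $v_i$ is non-isolated, so $N(v_i)$ absorbs $\{v_i\}$ for every $i$. The aim is to use absorption and $W$ to manufacture an odd closed walk of length strictly less than $\ell$, contradicting minimality of $\ell$.

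The basic tool is that compatibility of $f$ with the edge relation, together with idempotency, implies that for any $n$ walks $U_0,\ldots,U_{n-1}$ in $\relstr{G}$ of common length $L$ starting at the same vertex $v$, the coordinatewise image $f(U_0,\ldots,U_{n-1})$ is again a walk of length $L$ from $v$, whose endpoint is $f$ applied to the endpoints of the $U_i$. Hence, for every vertex $v$ and every length $L$, the set of endpoints of walks of length $L$ from $v$ is a subuniverse of $(V,f)$. The absorption hypothesis then supplies the crucial move: if $n-1$ of the $U_i$ end in $N(v_0)$ and one ends at $v_0$, then the combined walk of length $L$ from $v_0$ lands in $N(v_0)$, so closing up with the returning edge gives a closed walk of length $L+1$ through $v_0$.

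I would then try to match walks so that $L+1$ is odd and strictly less than $\ell$. The natural candidates are the alternating walk $v_0,v_1,v_0,v_1,\ldots$ (reaching $v_0$ at every even length and $v_1$ at every odd length) and the prefixes of $W$ (reaching $v_i$ after $i$ steps, with $v_1,v_{\ell-1}\in N(v_0)$ and, by minimality of $\ell$, no other prefix endpoint in $N(v_0)$). These match only at $L=\ell-1$, which produces an odd closed walk of length $\ell$ and gives no reduction. To achieve a strict reduction, one would iterate the same construction, using absorption at interior vertices $v_i$ of $W$ (each $N(v_i)$ absorbs $\{v_i\}$) to splice in a short detour through $v_i$ and trade two edges of $W$ for a shorter piece, while leveraging minimality of $\ell$ to control parity.

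The main obstacle is turning the one-sided absorption property into an actual shortening: no single application of $f$ shrinks the walk, so one needs an iteration combining absorption at several vertices of $W$ with careful parity bookkeeping, using the minimality of $\ell$ to ensure that intermediate modifications do not accidentally produce a closed walk of the wrong parity. Carrying out this iteration rigorously and showing that it terminates in a strictly shorter odd closed walk is the technical heart of the argument.
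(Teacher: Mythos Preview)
Your proposal has a genuine gap at exactly the point you yourself flag as ``the technical heart of the argument.'' You correctly observe that combining $n-1$ walks of length $L$ ending in $N(v_0)$ with one ending at $v_0$ yields a walk of length $L$ ending in $N(v_0)$, hence a closed walk of length $L+1$; and you correctly note that with the obvious candidates (the alternating walk and the prefixes of $W$) this only produces $L+1=\ell$, giving no reduction. From there the proposal becomes a wish rather than an argument: you say one should ``iterate,'' ``splice in a short detour through $v_i$,'' and ``trade two edges of $W$ for a shorter piece,'' but none of these phrases is given a precise meaning, and there is no mechanism offered that would produce an odd closed walk of length strictly less than $\ell$. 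Absorption at interior vertices $v_i$ gives you walks ending in $N(v_i)$, not shorter closed walks through $v_0$; it is not at all clear how to chain these applications to achieve a strict decrease while controlling parity. As written, the proof stops exactly where the real work begins.

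The paper's route is entirely different and does not proceed by shortening a minimal odd cycle at all. It first proves the much stronger local loop lemma (Theorem~\ref{main-thm}): the heart of that proof is an explicit construction of a substitution $f\colon[:n]^N\to\relstr G$ into a large star power $t^{*(N+1)}$, using combinatorics of periodic words, such that $t^{*(N+1)}$ applied to two shifts of $f$ gives the same value with an edge between them. Theorem~\ref{absorbtion-loop-lemma} then drops out as an immediate corollary: pick any $x$ on the odd cycle and any neighbor $y$, set $\alpha_{i,i}=x$ and $\alpha_{i,j}=y$ for $j\neq i$; absorption gives the edge $x\to t(y,\ldots,y,x,y,\ldots,y)$, and Corollary~\ref{component-closed} lets one restrict to the non-bipartite component of $x$. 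So the paper never attempts the delicate shortening you are reaching for; it replaces it with a single global construction.
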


The absorption assumption in Theorem~\ref{absorbtion-loop-lemma} is
not particularly strong, it is weaker than compatibility with NU term,
or absorption of a diagonal by the edges of $\relstr G$, see
Proposition 4.5 in~\cite{WeakestIdempotent}.
On the other hand, it still requires some form of homogeneity --
it have to be satisfied for every non-isolated node $x$, and the
definition of absorption hides another universal quantifiers
inside. The idea that such level of homogeneity may not be necessary
was expressed by the following question in~\cite{WeakestIdempotent}
\begin{question}
\label{local-loop-question}
Let $\relstr G$ be an undirected graph,
containing a cycle
of odd length with an element $a$. Moreover let $f$ be an
idempotent operation compatible with $\relstr G$ such that the
neighborhood of the node $a$ absorbs $\{a\}$ with respect to $f$.
Does $\relstr G$ necessarily contain a loop?
\end{question}

A slight progress in this area was made before. L. Barto has found
a proof for finite set $A$, and also a general proof in the case of
cycle of length 3 was found.
The main result of this paper is a version of loop lemma under even
significantly weaker assumptions than the original question. That
makes our ``local loop lemma'' one of the strongest, even among finite
loop lemmata.
\begin{theorem}[local loop lemma]
\label{main-thm}
Consider a set $A$, operation $t\colon A^n\to A$ a digraph
$\relstr G$ on $A$, and vertices $\alpha_{i,j}\in\relstr G$ for
$i,j\in\{0,\ldots,n-1\}$ such that
\begin{enumerate}
\item $t$ is idempotent,
\item $\relstr G$ is compatible with $t$,
\item $\relstr G$ is either a strongly connected digraph containing
directed cycles of all lengths starting with two, or $\relstr G$ is an undirected
connected non-bipartite graph.
\item for every $i \in \{0,\ldots,n-1\}$, there is a $\relstr G$-edge
  \[
  \alpha_{i,i} \redgev t(\alpha_{i,0},
  \alpha_{i,1}, \ldots, \alpha_{i,n-1})
  \]
\end{enumerate}
Then $\relstr G$ contains a loop.
\end{theorem}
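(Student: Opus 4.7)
The plan is to work by contradiction and exploit the gap between the very local algebraic data of condition~(4) and the highly homogeneous combinatorial structure forced by condition~(3).

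\textbf{Reduction to the strongly connected case.} The undirected non-bipartite case should reduce to the directed one: view every undirected edge as a $2$-cycle, which already supplies cycles of length~$2$, and combine with any odd cycle (whose existence is equivalent to non-bipartiteness) to produce directed cycles of every length $\geq 2$. From here on I would treat $\mathbb{G}$ as strongly connected with directed cycles of all lengths $\geq 2$.

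\textbf{Getting a first derived edge.} Set $\beta_i := t(\alpha_{i,0},\ldots,\alpha_{i,n-1})$, so (4) reads $\alpha_{i,i}\redgev \beta_i$. By compatibility of $t$ applied coordinatewise to these $n$ edges,
\[
d := t(\alpha_{0,0},\alpha_{1,1},\ldots,\alpha_{n-1,n-1}) \redgev t(\beta_0,\beta_1,\ldots,\beta_{n-1}) =: e.
\]
Because $\mathbb{G}$ has cycles of every length $\geq 2$, for every sufficiently large $\ell$ there are walks of length $\ell$ from $\beta_i$ to $\alpha_{i,i}$ for each $i$ simultaneously. Applying $t$ coordinatewise to these $n$ walks yields a walk of length $\ell$ from $e$ back to $d$, and prepending the edge $d\redgev e$ produces a closed walk through $d$ of length $\ell+1$.

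\textbf{Iteration and contraction.} The heart of the argument would be to convert these derived closed walks into genuine loops. I would iterate the construction: the vertices occurring on the new closed walks, together with appropriately chosen applications of $t$, produce new tuples $(\alpha'_{i,j})$ again satisfying the diagonal edge condition of (4), but now living in the subalgebra generated by the original data and with tighter combinatorial control. A well-chosen monotone invariant — for instance the minimum length of a closed walk through some distinguished vertex, or a rank associated to the subalgebra — should strictly decrease. Alternatively, after enough iterations, the propagated data would provide a full-fledged absorption of a vertex by its neighborhood, at which point Theorem~\ref{absorbtion-loop-lemma} (or a directed analogue) could be invoked inside the relevant substructure to extract a loop.

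\textbf{Main obstacle.} The hard part is clearly the contraction step: closed walks of length $\geq 2$ do not on their own shrink to loops, and the data in (4) is too thin to guarantee absorption outright. One must use the richness in (3) to manufacture, from a single edge per coordinate, sufficiently many ``parallel'' walks so that coordinatewise $t$-combinations force overlaps, and one must manage the passage to a possibly infinite subalgebra — perhaps via a Zorn-style choice of a minimal substructure still satisfying the hypotheses — so that the iteration does not merely produce an infinite regress but terminates with a vertex satisfying $v\redgev v$.
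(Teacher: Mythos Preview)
Your reduction in the undirected case is incorrect. An undirected connected non-bipartite graph need not contain directed closed walks of every length at least two: the $5$-cycle $C_5$ has closed walks of every even length and of every odd length $\geq 5$, but no closed walk of length $3$. The paper handles this by a different reduction: if $l$ is the smallest odd cycle length and $l\geq 3$, pass to the relational power $\mathbb{G}^{\circ(l-2)}$; this digraph is still compatible with $t$, still satisfies condition~(4) (its edge set contains that of $\mathbb{G}$), has closed walks of all lengths $\geq 2$, and has no loop by minimality of $l$, so one may apply the directed version and obtain a contradiction.

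More seriously, your ``iteration and contraction'' step is not a proof but a hope, and the hope is misplaced. The observation that $t(\alpha_{0,0},\ldots,\alpha_{n-1,n-1}) \redgev t(\beta_0,\ldots,\beta_{n-1})$ is fine, but from there you have only produced another edge in $\mathbb{G}$, with no new structural information; there is no monotone invariant in sight, and the data of~(4) does not self-improve toward absorption. The paper's argument is of an entirely different nature: it is a single explicit construction, with no iteration or minimality argument. One builds, by hand, a substitution $f\colon [{:}n]^N\to A$ into a large star power $t^{*(N+1)}$ with the property that for every word $\mathbf{x}$ either (i) $f(\mathbf{x})=\alpha_{i,i}$ and $f(\mathbf{x}[1{:}]+[j])=\alpha_{i,j}$ for all $j$, or (ii) $f(\mathbf{x})\redgev f(\mathbf{x}[1{:}]+[j])$ for all $j$. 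Then two shifted copies of $f$ give the same value of $t^{*(N+1)}$ (one by idempotency at the outermost layer, the other at the innermost), while case~(i) uses condition~(4) and case~(ii) uses compatibility to give an edge between them, yielding the loop. The real work is the word-combinatorial design of $f$, which uses periodicity of words, a priority function on length-$W$ windows, and a local-maximum selection; none of this is suggested by your outline.
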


\begin{proof}[Proof of positive answer to Question~\ref{local-loop-question}]
Consider an element
$x\in A$ in an odd cycle such that the neighborhood of
$x$ absorbs $\{x\}$. Then the component of $x$ is closed under $t$
(see Corollary~\ref{component-closed} for detailed explanation), so
we can restrict to that component.
The item (4) is satisfied by putting $\alpha_{i,i} = x$ and $\alpha_{i,j}=y$
otherwise, where $y$ is any element in the neighborhood of $x$. Then
$t(y,\ldots,y,x,y,\ldots,y)$ is in the neighborhood by absorption, so
$$
\alpha_{i,i} = x \redgev t(y,\ldots,y,x,y,\ldots,y)
= t(\alpha_{i,0},\ldots,\alpha_{i,n-1}).
$$
\end{proof}

Note that the absorption approach is not the only one widely used to
tackle loop lemmata. The oldest method is based on performing
pp-definitions and pp-interpretations mostly on the graph side. This
resulted in older, weaker versions of Theorem~\ref{dir-loop-lemma},
see~\cite{BulatovLoop,HellNesetril}, but also provides a state-of-the-art
version of loop lemma for oligomorphic
structures~\cite{PseudoSiggers}. The most recent technique is based on
the correspondence of certain loop lemmata with Maltsev
conditions,
see~\cite{LoopConditions,LoopConditionsDirected,PseudoLoopConditions}.
However, none of the methods available are local enough for our
purposes. Therefore, we have chosen a different approach, a blindly
straightforward one. The only thing we actually do is that we define
what exactly to plug into a star-power of the operation $t$ to get a
loop. Yet, such an approach appears to be among the most efficient
ones.

\subsection{Outline}

In section~\ref{preliminaries} we give proper definitions of the used
terms, alongside with our notation for finite sequences that will be
used in the main proof.
In section~\ref{main-thm-sec} we prove our main result,
Theorem~\ref{main-thm}. In section~\ref{double-loop}, we prove a stronger local version of the existence of a weakest non-trivial equations, the main result from~\cite{WeakestIdempotent}. In
section~\ref{taylor-sec}. we get further strengthening of
Theorem~\ref{main-thm} that yields a version of
Theorem~\ref{dir-loop-lemma} with slightly stronger relational
assumption (strongly connected digraph) but slightly weaker algebraic
assumptions.
In section~\ref{conclusion}, we discuss possible further
generalizations of the main result.

\section{Preliminaries and notation}
\label{preliminaries}

\subsection{Words, integer intervals}

Consider a set $\alph$ representing an alphabet. By a word, we mean a
finite sequence of elements in $\alph$. The set of all
words in the alphabet $\alph$ of length $n$ is denoted by $\alph^n$.
For manipulation with words, we use a Python-like syntax.
\begin{itemize}
\item $\mathbf x = [a_0, a_1, \ldots, a_{n-1}]$ represents a word of
  length $n$, The length of $\mathbf x$ is denoted by $|\mathbf x|$.
\item Elements of the word $\mathbf x$ can be extracted using an index
  in round brackets after the word, that is $\mathbf x[i] = a_i$. The
  first position is indexed by zero. By a \emph{position} in a word
  we mean an integer that represents a valid index.
\item For $0\leq i\leq j\leq n$ we define a \emph{subword}
  \[\mathbf x[i:j] = [a_i, a_{i+1}, \ldots, a_{j-1}].\]
  Notice that the interval includes $i$ and does not include $j$.
\item If $i$ or $j$ is omitted, the boundaries of the word are
  used. That is
  \[
  \mathbf x[i:] = [a_i, \ldots, a_{n-1}], \quad \mathbf x[:j] = [a_0,\ldots,a_{j-1}]
  \]
\item Inspired by the subword notation, we use single $[i:j]$ to
  represent an integer interval. That is
  $[i:j] = \{i,i+1,\ldots,j-1\}$, where $i,j$ can be arbitrary
  integers. Notice that $i$ is included in that interval while $j$ is
  not. If $i$ is omitted, it is meant implicitly as zero, that is
  $[:n] = \{0,1,\ldots,n-1\}$. The set of all integers is denoted by
  $\Z$.
\item Words can be concatenated using symbol $+$, that is
  \[[a_0,\ldots,a_{n-1}]+[b_0,\ldots,b_{m-1}] =
  [a_0,\ldots,a_{n-1},b_0,\ldots,b_{n-1}].\].
\end{itemize}

A word $\mathbf x$ is said to be periodic with a period $k\geq 1$, or
briefly $k$-periodic, if
$\mathbf x[i] = \mathbf x[i+k]$ whenever both $i$ and $i+k$ are valid
indices. Alternatively speaking, $\mathbf x\in\alph^n$ is $k$-periodic
if $k\geq n$ or $\mathbf x[:n-k] = \mathbf x[k:]$. A 1-periodic word
is also called a constant word.
In our proof, we use the following well-known property of
periodic words.
\begin{proposition}[Periodicity lemma]
  \label{periodic-lemma}
  Let $a,b$ be positive integers and $\mathbf x$ be a word of length
  at least $a+b-\gcd(a,b)$. If $\mathbf x$ is both $a$-periodic and
  $b$-periodic, it is also $\gcd(a,b)$-periodic.
\end{proposition}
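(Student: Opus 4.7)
This is the classical Fine--Wilf periodicity lemma, and the plan is to prove it by strong induction on $a+b$, mimicking the Euclidean algorithm. In the base case $a = b$ there is nothing to prove: $d = a = b$, so $a$-periodicity already is $d$-periodicity.

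For the inductive step, assume without loss of generality that $a > b$, and set $d = \gcd(a,b)$. Since $d$ divides the positive integer $a - b$, one has $a - b \geq d$, hence $a \geq b + d$ and in particular $n \geq a + b - d \geq 2b$. The plan is to first establish that $\mathbf{x}$ is also $(a - b)$-periodic. Granting this, the pair of periods $(a - b,\, b)$ has the same $\gcd$ equal to $d$, a strictly smaller sum $(a-b)+b = a < a+b$, and the length bound $n \geq a - d = (a - b) + b - \gcd(a-b,b)$ continues to hold; the inductive hypothesis then delivers $d$-periodicity of $\mathbf{x}$.

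To show $(a-b)$-periodicity at a position $i$ with $i + (a - b) \leq n - 1$, I split into two cases. If $i + a \leq n - 1$, I combine $a$-periodicity forward with $b$-periodicity backward (legitimate since $i + a \geq a > b$) and conclude directly that $\mathbf{x}[i] = \mathbf{x}[i + a] = \mathbf{x}[i + a - b]$. The remaining case is $i + a > n - 1$, which pins $i$ to the narrow tail window $[n - a,\, n - a + b - 1]$; reparametrising by $j = i - (n-a) \in [0, b)$, the desired equality becomes $\mathbf{x}[n - a + j] = \mathbf{x}[n - b + j]$, which I would prove by chaining backward applications of $b$-periodicity on both sides (valid because $n \geq 2b$ ensures the indices remain $\geq 0$) and bridging the two chains by one or two applications of $a$-periodicity at intermediate positions inside $[0,n)$.

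The main obstacle is precisely this tail case: the length hypothesis $n \geq a + b - d$ has to be used tightly, in the equivalent forms $n \geq 2b$ and $a \geq b + d$, so that every intermediate index visited by the chain of periodicity steps stays a valid index of $\mathbf{x}$. Once this combinatorial bookkeeping is in place, the Euclidean descent on $a + b$ terminates at the base case $a = b = d$ and yields the conclusion.
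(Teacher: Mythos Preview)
The paper does not actually prove this proposition; it is stated as a ``well-known property of periodic words'' and used without proof. So there is no paper-proof to compare against, and your proposal must stand on its own.

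Your overall strategy (Euclidean descent on $a+b$, reducing the pair $(a,b)$ to $(a-b,b)$) is a standard and correct route to Fine--Wilf. The base case and Case~1 of your $(a-b)$-periodicity argument are fine, as is the verification that the inductive hypothesis applies to the pair $(a-b,b)$.

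The gap is in your tail case. You write that for $j\in[0,b)$ you would chain backward $b$-steps ``valid because $n\geq 2b$ ensures the indices remain $\geq 0$''. That justification works for the right-hand index $n-b+j$, but not for the left-hand index: $n-a+j-b \geq (a+b-d)-a-b = -d$, which can be negative. Concretely, take $a=5$, $b=3$, $d=1$, $n=7$, $i=2$: then $i<b$ and $i+a>n-1$, yet $i-b=-1$ is out of range, so neither a backward $b$-step nor a forward $a$-step is available at $i$. A valid chain from $\mathbf x[2]$ to $\mathbf x[4]$ exists, but it has length six and your description does not produce it. This is exactly the place where the tight hypothesis $n\geq a+b-d$ (rather than $n\geq a+b$) bites, and your sketch does not handle it.

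A clean repair that keeps your inductive scheme: do not try to prove $(a-b)$-periodicity on all of $\mathbf x$. Instead, show that the \emph{suffix} $\mathbf y=\mathbf x[b{:}n]$ is $(a-b)$-periodic. For $i\geq b$ with $i+(a-b)\leq n-1$ you always have $i-b\geq 0$ and $(i-b)+a\leq n-1$, so
\[
\mathbf x[i]=\mathbf x[i-b]=\mathbf x[i-b+a]=\mathbf x[i+(a-b)]
\]
goes through with no tail case. Since $|\mathbf y|=n-b\geq a-d=(a-b)+b-\gcd(a-b,b)$, the inductive hypothesis gives that $\mathbf y$ is $d$-periodic. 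Finally, extend $d$-periodicity from $\mathbf y$ to all of $\mathbf x$ using $b$-periodicity and $n\geq 2b$: for $i<b$ one has $\mathbf x[i]=\mathbf x[i+b]$ and $\mathbf x[i+d]=\mathbf x[i+d+b]$ (or $\mathbf x[i+d]$ already lies in $[b,n)$), and the two right-hand values lie in $[b,n)$ at distance a multiple of $d$, hence agree.
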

\begin{corollary}
  \label{subword-period}
  Let $\mathbf x$ be a word and $k\geq 2$ be the shortest period of
  $\mathbf x$. If $\mathbf y$ is a subword of $\mathbf x$ such that
  $|\mathbf y|\geq 2k-2$. Then $k$ is the shortest period of
  $\mathbf y$.
\end{corollary}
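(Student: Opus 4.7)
I plan to argue by contradiction: assume that the shortest period $k'$ of $\mathbf{y}$ is strictly less than $k$, and derive that $\mathbf{x}$ itself admits a period smaller than $k$.

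Since $\mathbf{y}$ is a subword of $\mathbf{x}$, it inherits every period of $\mathbf{x}$, so $\mathbf{y}$ is also $k$-periodic. The Periodicity Lemma (Proposition~\ref{periodic-lemma}) then applies to $\mathbf{y}$ with the two periods $k$ and $k'$, because $|\mathbf{y}|\ge 2k-2\ge k+k'-\gcd(k,k')$: the last inequality rearranges to $\gcd(k,k')\ge k'-k+2$, which follows from $k'\le k-1$ and $\gcd(k,k')\ge 1$. Hence $\mathbf{y}$ has period $d:=\gcd(k,k')$ with $d\mid k$ and $d<k$.

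Next I would transfer this period $d$ from $\mathbf{y}$ back to the whole of $\mathbf{x}$. Writing $\mathbf{y}=\mathbf{x}[a:b]$, for any position $j$ of $\mathbf{x}$ with $j+d$ also valid, I want to find an integer $c$ with $j+ck\in [a:b-d]$. Using period $k$ of $\mathbf{x}$ gives $\mathbf{x}[j]=\mathbf{x}[j+ck]$ and $\mathbf{x}[j+d]=\mathbf{x}[j+ck+d]$; using period $d$ of $\mathbf{y}$ at position $j+ck$ gives $\mathbf{x}[j+ck]=\mathbf{x}[j+ck+d]$; chaining these equalities yields $\mathbf{x}[j]=\mathbf{x}[j+d]$. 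Such a $c$ exists as soon as $[a:b-d]$ contains a full residue system modulo $k$, i.e.\ $|\mathbf{y}|-d\ge k$, which is guaranteed by $|\mathbf{y}|\ge 2k-2$ whenever $d\le k-2$.

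The only remaining case is $d=k-1$, but $d\mid k$ together with $d=k-1$ forces $k=2$ and $d=1$. In this situation $\mathbf{y}$ is constant of length at least two, so $\mathbf{x}$ contains two adjacent equal entries, and combined with period $2$ this propagates in both directions to make $\mathbf{x}$ itself constant. Either way $\mathbf{x}$ acquires a period strictly smaller than $k$, contradicting the minimality of $k$. I expect the only real obstacle to be the length bookkeeping, and in particular noticing that the single exceptional case $d=k-1$ is forced into the trivial situation $k=2$, where two consecutive equal letters collapse a $2$-periodic word to a constant.
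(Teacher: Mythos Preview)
Your argument is correct and follows the same overall strategy as the paper: assume a shorter period $k'$ of $\mathbf y$, apply the Periodicity Lemma to get $d=\gcd(k,k')$ as a period of $\mathbf y$, then push $d$-periodicity back to $\mathbf x$ to contradict minimality of $k$.

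The only difference is in the transfer step. You shift each position of $\mathbf x$ by a multiple of $k$ into the window $[a:b-d]$, which requires $|\mathbf y|\geq k+d$ and forces you to handle the boundary case $d=k-1$ separately. The paper avoids this by exploiting $d\mid k$ more directly: since $\mathbf x$ is $k$-periodic and $|\mathbf y|\geq k$, the word $\mathbf x$ is determined by any length-$k$ subword of $\mathbf y$; a $d$-periodic block of length $k$ with $d\mid k$ then forces all of $\mathbf x$ to be $d$-periodic. This removes the need for the case split, but your version is equally valid.
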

\begin{proof}
  The word $\mathbf y$ is $k$-periodic. To obtain a contradiction, let
  $k' \leq k-1$ be another period of $\mathbf y$. Since
  $|\mathbf y|\geq k'+k-1$, the word $\mathbf y$ is
  $\gcd(k,k')$-periodic. Since $|\mathbf y|\geq k$ and $\mathbf y$ is
  a subword of the $k$-periodic word $\mathbf x$, also $\mathbf x$ is
  $\gcd(k,k')$-periodic, which contradicts the minimality of $k$.
\end{proof}

\subsection{Operations, star powers}

An $n$-ary operation $t$ on a set $A$ is a mapping
$t\colon A^n\to A$. Instead
of $t([a_0,\ldots,a_{n-1}])$, we simply write $t(a_0,\ldots,a_{n-1})$.
An operation $t$ on $A$ is said to be \emph{idempotent} if
$t(x,x,\ldots,x) = x$ for every $x\in A$.

For a given $n$-ary operation $t$ and a non-negative integer $k$ we
recursively define $k$-th star power of $t$, denoted $t^{*k}$,
to be $n^k$-ary operation given by
\begin{align*}
t^{*0}(x) &= x, \\
t^{*(k+1)}(x_0,\ldots,x_{n^{k+1}-1}) &=
t(t^{*k}(x_0,\ldots,x_{n^k-1}),\ldots,t^{*k}(x_{(n-1)n^k},\ldots,x_{n^{k+1}-1})).
\end{align*}
We perceive variables in star powers as being indexed by words in
$[:n]^k$, where the left-most letters corresponds to the outer-most
position in the composition tree. More precisely, a substitution of
variables in a star power $t^{*k}$ is represented by a function
$f\colon [:n]^{k}\to A$. If $k=0$, then $t^{*0}(f) = f([])$.
Otherwise, we can compute $t^{*k}(f)$ by
\[
t^{*k}(f) = t\bigl(t^{*k}(f_0), t^{*k}(f_1), \ldots, t^{*k}(f_{n-1})\bigr)
\text{ or }
t^{*k}(f) = t^{*(k-1)}(f'),
\]
where $f_i, f'\colon [:n]^{k-1}$ are defined by
\[
f_i(\mathbf x) = f([i]+\mathbf x)\text{ and }
f'(\mathbf x) = t\bigl(f(\mathbf x+[0]), f(\mathbf x+[1]), \ldots, f(\mathbf x+[n-1])\bigr).
\]

\subsection{Algebras, equations}

A signature $\Sigma$ is a set of symbols accompanied with their
arities. An abstract algebra $\alg A = (A, t_0, t_1, \ldots)$ in the
signature $\Sigma$ is a set $A$ together with representations of symbols in
$\Sigma$ as actual \emph{basic} operations on $A$ of the
corresponding arities. A term in a signature $\Sigma$ is a
syntactically valid expression using the term symbols of $\Sigma$ and
variables. A term operation in $\alg A$ is an operation on $A$ that
can be written as a term in $\Sigma$, represented by basic operations
in $\alg A$.

An equation in $\Sigma$ is a pair of terms in $\Sigma$ written as
$t_0 \approx t_1$. An \emph{equational condition} $\mathcal C$ is a system of
equations in any signature, say $\Delta$. An algebra $\alg A$ is said to
satisfy an equational condition $\mathcal C$ if it is possible to
assign some term operations in $\alg A$ to the symbols in $\Delta$ so
that all the equations in $\mathcal C$ hold for any choice of
variables in $\alg A$.

Equational conditions are thoroughly studied in universal algebra in
the form of strong Maltsev conditions (equational conditions
consisting of finitely many equations) and Maltsev conditions
(infinite disjunction of strong Maltsev conditions).
Of particular interest are the Taylor equations that represents the
weakest non-trivial idempotent Maltsev condition. The signature
consists of a
single $n$-ary symbol $t$. The \emph{Taylor system of equations} is any
system of $n$ equations of the form
\begin{align*}
t(x,?,?,\ldots,?,?) &\approx t(y,?,?,\ldots,?,?), \\
t(?,x,?,\ldots,?,?) &\approx t(?,y,?,\ldots,?,?), \\
&\vdots\\
t(?,?,?,\ldots,?,x) &\approx t(y,?,?,\ldots,?,y), \\
t(x,x,x,\ldots,x,x) &\approx x,
\end{align*}
where each question mark stands for either $x$ or $y$. A \emph{Taylor
  operation} is any operation satisfying any Taylor system of equations.
A \emph{quasi Taylor} system of equations is a
Taylor system of equations without the last one requiring
idempotency. For the purposes of our proofs, we enumerate the first $n$ (quasi)
Taylor equations from top to bottom by integers
from $0$ to $n-1$.
For more background on universal algebra, we refer
the reader to~\cite{Bergman}.
Note that it was recently
proved~\cite{WeakestIdempotent} that the weakest non-trivial
idempotent Maltsev condition can be written in a specific form of a
strong Maltsev condition. We reprove this fact in
Section~\ref{double-loop}.

\subsection{Relations, digraphs}

An $n$-ary relation on a set $A$ is any subset of $A^n$. A relation
$R$ is said to be compatible with an $m$-ary operation $t$, if for any
tuple of words $\mathbf r_0, \mathbf r_1, \ldots, \mathbf r_{m-1}\in R$, the
the result of $t(\mathbf r_0, \mathbf r_1, \ldots, \mathbf r_{m-1})$ is
in $R$ as well, where the operation $t$ is applied to
$\mathbf r_0,\ldots \mathbf r_{m-1}$ point-wise. A relation is said to
be compatible with an algebra $\alg A$ if it is compatible with all
basic operations of $\alg A$, or algebraically said, if it is a subuniverse of an
algebraic power $\alg A^n$. Notice that if a relation $R$ is
compatible with an algebra $\alg A$, it is compatible with all term
operations in $\alg A$. In particular, if $R$ is compatible with an
operation $t$, then $R$ is compatible with any star power of $t$.

A relational structure $\relstr R = (A,R_0,R_1\ldots)$ on $A$ is the set
$A$ together with a collection of relations $R_0, R_1 \ldots$ on $A$. An
algebra $\alg A$, or an operation $t$ on $A$ is compatible with a relational
structure $\relstr R$ on $A$, if $\alg A$, or $t$, is compatible
with all the relations in $\relstr R$.

A digraph $\relstr G = (V,E)$ is a relational structure with a single
binary relation. If the set $E$ of edges is symmetric, we call the
digraph an \emph{undirected graph}.
Given a digraph $\relstr G = (V,E)$, we usually denote the edges by
$u\redgev v$ instead of $[u,v]\in E$.
By a \emph{$n$-walk} from $v_0$ to $v_n$, or simply a
\emph{walk}, we mean a sequence
(word) of vertices in the digraph
\[
[v_0, v_1, \ldots, v_n]
\]
such that $v_i\redgev v_{i+1}$ for all $i\in[:n]$. While we use most
of the notation we have for words also for walks, we redefine a length
of a walk to be $n$, that is one less than the length of the
appropriate word of vertices.
A \emph{cycle walk} of length $n$, or $n$-cycle walk, is such an
$n$-walk $\mathbf w$ that $\mathbf w[0] = \mathbf w[n]$.

The $n$-th relational power $\relstr G\relpow n$ of a digraph
$\relstr G$ is a digraph with the same set of vertices, and
$u\redgev v$ in $\relstr G\relpow n$ if and only if there is a
$n$-walk in $\relstr G$ from $u$ to $v$. Notice that if a
digraph $\relstr G$ is compatible with an algebra $\alg A$, any
relational power of $\relstr G$ is compatible with $\alg A$ as well.

A digraph is said to be strongly connected, if there is a walk from
$u$ to $v$ for any pair of vertices $u,v$. We say that a digraph have
an algebraic length 1, if it cannot be homomorphically mapped to a
directed cycle of length greater than one.

We finish this chapter by proving basic combinatorial properties of
strongly connected graphs of algebraic length 1.
\begin{proposition}
  \label{large-enough-cycles}
  Let $u$ be a vertex in a strongly connected digraph $\relstr G$ with
  algebraic length $1$. Then there are directed cycle walks
  containing $u$ of any large enough length.
\end{proposition}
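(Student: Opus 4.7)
My plan is to exploit the fact that the set of lengths of cycle walks through $u$ is closed under addition, and hence by the Sylvester--Frobenius theorem covers all sufficiently large integers as soon as it is coprime. The coprimality will be forced by the algebraic length 1 assumption via a direct construction of a homomorphism to a short directed cycle.

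First I would define
\[
L(u) = \{\, n \geq 1 \mid \text{there is an $n$-cycle walk in $\relstr G$ containing $u$}\,\}.
\]
Since $\relstr G$ is strongly connected (and has more than one vertex, or else a loop is already present), $L(u)$ is nonempty: take any walk from $u$ to another vertex and a walk back. The set $L(u)$ is closed under addition, because two cycle walks through $u$ can be concatenated at $u$ into a longer one. Let $d = \gcd L(u)$.

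The main step is to show $d = 1$. I would argue by constructing a homomorphism $\phi\colon\relstr G\to\mathbb Z/d\mathbb Z$ (viewed as a directed $d$-cycle) and invoking algebraic length 1. For any vertex $v$, strong connectivity gives walks $u \to v$ and $v \to u$; if $\ell_1,\ell_2$ are lengths of two walks from $u$ to $v$ and $m$ is the length of a walk $v \to u$, then $\ell_1 + m,\ \ell_2 + m \in L(u)$, so $\ell_1 \equiv \ell_2 \pmod d$. Hence $\phi(v) \defeq (\text{length of any walk }u \to v) \bmod d$ is well-defined. For an edge $v \redgev w$, extending a walk $u\to v$ of length $\ell$ gives a walk $u\to w$ of length $\ell+1$, so $\phi(w) \equiv \phi(v) + 1 \pmod d$. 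Thus $\phi$ is a digraph homomorphism to the directed $d$-cycle, and algebraic length 1 forces $d = 1$.

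With $\gcd L(u) = 1$, the Sylvester--Frobenius (Chicken McNugget) theorem for numerical semigroups generated by any finite subset of $L(u)$ with $\gcd 1$ gives an integer $N$ such that every integer $\geq N$ lies in $L(u)$, which is exactly the claim. The only real obstacle is the well-definedness and homomorphism check above, and this is routine once one notices that strong connectivity provides the ``return walks'' needed to convert differences of $u\to v$ walk lengths into elements of $L(u)$.
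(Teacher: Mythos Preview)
Your proof is correct and uses the same two ingredients as the paper: the homomorphism to $\mathbb Z/d\mathbb Z$ forced by strong connectivity, and a numerical--semigroup (Frobenius) argument to cover all large lengths.

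The decomposition differs slightly. The paper first argues that the set of \emph{all} cycle-walk lengths in $\relstr G$ has $\gcd$ equal to $1$ (stating the homomorphism observation without details), then picks finitely many coprime cycles anywhere in $\relstr G$ and routes each of them through $u$ via walks $u\to\mathbf c_i[0]$ and back; the resulting lengths $\sum_i(|\mathbf w_i|+x_i|\mathbf c_i|+|\mathbf w'_i|)$ hit all large integers. You instead work directly with $L(u)$ throughout, show it is an additive subsemigroup, and prove $\gcd L(u)=1$ by explicitly building the homomorphism $\phi(v)=(\text{length of any walk }u\to v)\bmod d$. Your version is a bit more self-contained (you actually construct $\phi$ rather than assert its existence) and avoids the separate routing step; the paper's version makes the dependence on finitely many concrete cycles more visible. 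Either way the content is the same.
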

\begin{proof}
  First observe that if all cycle walks in $\relstr G$ are divisible
  by some $n\geq 2$ and $\relstr G$ is strongly connected, then
  $\relstr G$ can be homomorphically mapped to the directed cycle of
  length $n$.

  Therefore, since $\relstr G$ is supposed to have has algebraic
  length 1, there are some cycle walks
  $\mathbf c_0, \ldots \mathbf c_{k-1}$ such that the greatest common
  divisor of the lengths of the cycles equals one. Let
  $\mathbf w_i$ denote a walk from $u$ to $\mathbf c_i[0]$ and
  $\mathbf w'_i$ denote a walk from $\mathbf c_i[0]$ to $u$ for every
  $i\in[:k]$. There is a cycle walk starting in $u$ of
  any length of the form
  \[
  |\mathbf w_0| + x_0|\mathbf c_0| + |\mathbf w'_0|
  + |\mathbf w_1|  + x_1|\mathbf c_1| + |\mathbf w'_1|
  + \ldots +
  |\mathbf w_{k-1}| + x_{k-1}|\mathbf c_{k-1}| + |\mathbf w'_{k-1}|,
  \]
  where $x_0,x_1,\ldots,x_{k-1}$ stands for any non-negative integer
  coefficients. Since
  $\gcd(|\mathbf c_0|,\ldots,|\mathbf c_{k-1}|)=1$, this number can
  reach any large enough integer.
\end{proof}
\begin{proposition}
  \label{large-enough-walks}
  If $\relstr G$ is a finite strongly connected digraph with algebraic
  length 1, then there is an integer $K$ such that there is a $k$-walk
  from $v_0$ to $v_1$ for any $v_0,v_1\in\relstr G$ and $k\geq K$.
\end{proposition}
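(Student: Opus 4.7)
The plan is to reduce directly to Proposition~\ref{large-enough-cycles} by prepending a fixed connecting walk to a long cycle walk. The finiteness of $\relstr G$ then lets us pick a uniform bound $K$.

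First, for each pair of vertices $v_0, v_1 \in \relstr G$, I would use strong connectivity to fix some walk from $v_0$ to $v_1$, and call its length $\ell(v_0, v_1)$. Second, for each vertex $v_1$, Proposition~\ref{large-enough-cycles} supplies an integer $N(v_1)$ such that for every $m \geq N(v_1)$ there is a cycle walk at $v_1$ of length exactly $m$. Concatenating the fixed walk from $v_0$ to $v_1$ with a cycle walk at $v_1$ of length $k - \ell(v_0, v_1)$ produces a $k$-walk from $v_0$ to $v_1$ whenever $k \geq \ell(v_0, v_1) + N(v_1)$.

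Since $\relstr G$ is finite, the quantity $\ell(v_0,v_1) + N(v_1)$ takes only finitely many values as $(v_0,v_1)$ ranges over $\relstr G \times \relstr G$, so setting
\[
K = \max_{v_0, v_1 \in \relstr G} \bigl(\ell(v_0, v_1) + N(v_1)\bigr)
\]
yields a single bound that works uniformly. For every $k \geq K$ and every pair $(v_0, v_1)$, there is a $k$-walk from $v_0$ to $v_1$, as required.

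There is no real obstacle here: the entire combinatorial content sits in Proposition~\ref{large-enough-cycles}, and the present statement is obtained by the straightforward ``walk, then loop around'' trick together with the observation that finiteness collapses countably many thresholds into a single maximum.
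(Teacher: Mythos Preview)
Your proof is correct and follows essentially the same ``walk, then loop around'' approach as the paper. The only cosmetic difference is that the paper fixes a single vertex $u$, applies Proposition~\ref{large-enough-cycles} once at $u$, and routes every pair through $u$ via $v_0 \to u \to u \to v_1$ with $K = 2d + C$, whereas you loop at the destination $v_1$ and take a maximum over all pairs; both are straightforward reductions to Proposition~\ref{large-enough-cycles}.
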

\begin{proof}
  Let $d(v_0, v_1)$ denote the length of the shortest walk from $v_0$ to
  $v_1$. Let $d$ be the largest $d(v_0,v_1)$
  among all pairs of vertices $v_0, v_1\in \relstr G$.
  Fix an element $u\in\relstr G$. By
  Proposition~\ref{large-enough-cycles}, there is such a length $C$
  that there is a $c$-cycle walk from $u$ to $u$ of any length $c\geq C$.
  Thus the
  choice $K = d+C+d$ works for any pair $v_0, v_1$ since
  \[
  k \geq K = d+C+d \geq d(v_0,u) + C + d(u, v_1).
  \]
\end{proof}
\begin{proposition}
  \label{dir-component-closed}
  Let $t$ be an idempotent $n$-ary operation compatible with a graph
  $\relstr G$. Let $\relstr H\subset\relstr G$ be a strongly
  connected component of $\relstr G$ that have an algebraic length
  1. Then $\relstr H$ is closed under $t$.
\end{proposition}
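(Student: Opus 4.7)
Given $h_0,\ldots,h_{n-1}\in H$, set $v := t(h_0,\ldots,h_{n-1})$. The goal is to show that $v$ lies in the same strongly connected component of $\relstr G$ as $h_0$, which forces $v\in H$. Equivalently, I need to exhibit walks from $h_0$ to $v$ and from $v$ to $h_0$ in $\relstr G$, and the strategy is to produce them by applying $t$ pointwise to suitably chosen walks inside $\relstr H$.

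First, since $\relstr H$ is strongly connected, for each $i$ pick a walk $\mathbf w_i$ in $\relstr H$ from $h_0$ to $h_i$. These walks will generally have different lengths, so the next step is to pad them to a common length. For this I would invoke Proposition~\ref{large-enough-cycles}, which applies to $\relstr H$ since it is strongly connected with algebraic length~$1$ and, crucially, does not assume finiteness: for each $i$ there is a threshold $C_i$ such that cycle walks through $h_i$ of every length $\geq C_i$ exist in $\relstr H$. Choose $L$ large enough that $L-|\mathbf w_i|\geq C_i$ for all $i$, and replace each $\mathbf w_i$ by a walk $\mathbf w'_i$ of length exactly $L$ from $h_0$ to $h_i$, obtained by concatenating $\mathbf w_i$ with a cycle at $h_i$ of the required length.

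Now apply $t$ pointwise to $(\mathbf w'_0,\ldots,\mathbf w'_{n-1})$. Compatibility of $t$ with $\relstr G$ yields a walk in $\relstr G$ of length $L$; by idempotency its starting vertex is $t(h_0,\ldots,h_0)=h_0$ and its ending vertex is $t(h_0,h_1,\ldots,h_{n-1})=v$. The symmetric construction, starting from walks $\mathbf u_i$ in $\relstr H$ from $h_i$ to $h_0$ padded to a common length by cycles of appropriate length, produces a walk from $v$ to $h_0$ in $\relstr G$. Combining the two walks places $v$ in the strongly connected component of $h_0$, namely $\relstr H$.

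The only real obstacle is that $\relstr G$ is not assumed finite, so the cleaner Proposition~\ref{large-enough-walks} (which directly equalizes walk lengths) is unavailable. The padding argument above, built only on Proposition~\ref{large-enough-cycles}, is what sidesteps this; everything else is a routine use of compatibility and idempotency.
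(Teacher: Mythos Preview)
Your proposal is correct and follows essentially the same approach as the paper: equalize the lengths of walks inside $\relstr H$ using Proposition~\ref{large-enough-cycles}, then apply $t$ pointwise and use idempotency plus compatibility to produce walks in $\relstr G$ connecting $t(h_0,\ldots,h_{n-1})$ to a vertex of $\relstr H$ in both directions. The only cosmetic difference is that you pad by appending cycles at each endpoint $h_i$ (invoking a separate threshold $C_i$ per vertex), whereas the paper fixes one base vertex $u\in\relstr H$, invokes Proposition~\ref{large-enough-cycles} once at $u$, and prepends cycles there to equalize lengths.
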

\begin{proof}
  Fix a vertex $u\in\relstr H$. By
  Proposition~\ref{large-enough-cycles}, there is such a length $C$
  that there are $c$-cycle walks from $u$ to $u$ of any length
  $c\geq C$.
  Consider any $v_0,\ldots,v_{n-1}\in\relstr H$. We prove that there
  is a walk from $u$ to $t(v_0,\ldots,v_{n-1})$.

  Let $\mathbf w_i$ denote a walk from $u$ to $v_i$. There are also
  walks from $u$ to $v_i$ of a fixed length
  \[
  k = C + \max(|\mathbf w_0|, |\mathbf w_1|, \ldots, |\mathbf w_n|).
  \]
  Therefore, there is a $k$-walk from $u$
  to $t(v_0,\ldots,v_{n-1})$ in $\relstr G$
  since $t$ is idempotent and $\relstr G\relpow k$ if compatible with
  $t$.
  The existence of the walk in the other direction is
  analogous. Hence $t(v_0,\ldots,v_{n-1})\in\relstr H$. Since
  $v_0,\ldots,v_{n-1}$ can be chosen arbitrarily, $\relstr H$ is
  closed under $t$.
\end{proof}
\begin{corollary}
  \label{component-closed}
  Let $\relstr H$ be a non-bipartite connected component of an
  undirected graph $\relstr G$ compatible with an idempotent operation
  $t$. Then $\relstr H$ is closed under the operation $t$.
\end{corollary}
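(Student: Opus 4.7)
The plan is to reduce Corollary~\ref{component-closed} directly to Proposition~\ref{dir-component-closed} by viewing the undirected graph as a symmetric digraph. Since an undirected edge $u\edgev v$ gives rise to both directed edges $u\redgev v$ and $v\redgev u$, connectedness of $\relstr H$ as an undirected graph immediately yields strong connectedness as a digraph. So the only nontrivial thing to check is that $\relstr H$, seen as a digraph, has algebraic length $1$.

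Suppose for contradiction that there is a homomorphism $\phi$ from $\relstr H$ to a directed cycle $\relstr C_k$ of length $k\geq 2$ (with vertex set $\Z/k\Z$ and edges $i\redgev i+1$). For any undirected edge $u\edgev v$ in $\relstr H$, both $u\redgev v$ and $v\redgev u$ are edges of the digraph, so $\phi(v)\equiv \phi(u)+1$ and $\phi(u)\equiv \phi(v)+1$ in $\Z/k\Z$. Adding these gives $2\equiv 0\pmod k$, hence $k=2$. But a homomorphism to $\relstr C_2$ is exactly a proper $2$-coloring, i.e.\ a bipartition of $\relstr H$, contradicting the assumption that $\relstr H$ is non-bipartite. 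Hence $\relstr H$ has algebraic length $1$.

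Now Proposition~\ref{dir-component-closed} applies directly: $\relstr H$ is a strongly connected component of the symmetric digraph associated to $\relstr G$, it has algebraic length $1$, and the operation $t$ is idempotent and compatible with (the symmetric version of) $\relstr G$. Therefore $\relstr H$ is closed under $t$.

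There is no real obstacle here; the only subtlety is making sure the step ``non-bipartite connected undirected graph, viewed as a digraph, has algebraic length $1$'' is argued cleanly, which the $2\equiv 0\pmod k$ computation handles in one line.
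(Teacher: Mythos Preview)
Your proposal is correct and follows exactly the intended route: the paper states Corollary~\ref{component-closed} without proof precisely because it is meant to be read off from Proposition~\ref{dir-component-closed} by treating the undirected graph as a symmetric digraph. Your check that a non-bipartite connected undirected graph has algebraic length $1$ via the $2\equiv 0\pmod k$ argument is the only detail needed, and you handle it cleanly.
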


\section{Proof of the local loop lemma}
\label{main-thm-sec}

We prove the local loop lemma in the following form.

\begin{theorem}[local loop lemma]
  \label{local-loop-basic}
  Consider a set $A$, operation $t\colon A^n\to A$ a digraph
  $\relstr G$ on $A$, and elements $\alpha_{i,j}$ for
  $i,j\in\{0,\ldots,n-1\}$ such that
  \begin{enumerate}
  \item $t$ is idempotent,
  \item $\relstr G$ is compatible with $t$,
  \item $\relstr G$ is a strongly connected digraph containing
    cycle walks of all lengths greater that one,
  \item for every $i \in \{0,\ldots,n-1\}$, there is a $\relstr G$-edge
    \[
    \alpha_{i,i} \redgev
    t(\alpha_{i,0}, \alpha_{i,1}, \ldots, \alpha_{i,n-1})
    \]
  \end{enumerate}
  Then $\relstr G$ contains a loop.
\end{theorem}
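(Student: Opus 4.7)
My plan is to choose, with care, a family of inputs to some star power $t^{*k}$ so that the output is a loop. Compatibility of $\relstr G$ with $t^{*k}$ means that applying $t^{*k}$ coordinatewise to a family of walks of a common length $L$ in $\relstr G$, indexed by $[:n]^k$, produces another walk of length $L$ in $\relstr G$. A constant walk in $\relstr G$ is a walk all of whose vertices coincide, which means that this vertex carries an edge to itself—a loop. So the goal reduces to designing the input family so that the output walk is constant, which by Proposition~\ref{periodic-lemma} will follow as soon as we exhibit two coprime periods for the output walk together with sufficient length.

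The data used to build the family is the following. Set $\beta_i=t(\alpha_{i,0},\ldots,\alpha_{i,n-1})$, so that hypothesis (4) supplies the edges $\alpha_{i,i}\redgev\beta_i$ for every $i\in[:n]$. Because $\relstr G$ is strongly connected and contains cycle walks of all lengths at least two, the argument of Proposition~\ref{large-enough-walks} adapts to show that for any finite set of vertex pairs of interest (in particular all pairs taken from the finite set of $\alpha_{i,j}$ and $\beta_i$) there is a common length above which walks of every larger length exist between each such pair. Using these fillers I would assemble, for each word $w\in[:n]^k$ and a well-chosen large $L$, a walk $\mathbf{w}_w\colon[0{:}L+1]\to A$ whose vertices at controlled positions equal prescribed $\alpha$-values (with the edge $\alpha_{i,i}\redgev\beta_i$ inserted at a designated step) and whose remaining vertices are chosen to stitch these anchors together. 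Applying $t$ through the star-power tree coordinatewise produces an output walk whose $\ell$-th vertex is an explicit star-power expression in the $\alpha_{i,j}$.

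The main idea is to engineer simultaneously two shifts of the indexing which both leave the output vertex invariant at each position. One shift should give a period $p$ inherited from the internal periodicity of the input walks along each branch of the star-power tree. A second shift should give a period $q$ coprime to $p$ coming from the edges $\alpha_{i,i}\redgev\beta_i$ combined with idempotency, in the following sense: after shifting by $q$, the star-power expression at a given position differs from the unshifted one only in that, under one occurrence of $t$, a whole row $(\alpha_{i,0},\ldots,\alpha_{i,n-1})$ is replaced by the constant tuple $(\alpha_{i,i},\ldots,\alpha_{i,i})$, whose $t$-value is $\alpha_{i,i}$ by idempotency. If both shifts leave every output position unchanged, the output walk is both $p$- and $q$-periodic, hence $1$-periodic (i.e.\ constant) by Proposition~\ref{periodic-lemma}, provided $L\geq p+q-1$, and a constant walk is a loop.

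The hardest step, which will in effect carry the entire proof, is producing a concrete family $\{\mathbf{w}_w\}_{w\in[:n]^k}$ for which both the $p$-shift and the $q$-shift manifestly leave every output expression unchanged. Since the second shift uses idempotency nontrivially, the bookkeeping of which $\alpha_{i,j}$ sits at each position of each input walk has to be arranged so that after shifting, the two formal star-power expressions for the same output position reduce, by repeated use of $t(x,\ldots,x)=x$, to identical terms in the $\alpha_{i,j}$. Getting this alignment right is the combinatorial core of the argument; once it is achieved, the periodicity lemma finishes the proof.
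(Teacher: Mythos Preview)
Your outline has a genuine gap at exactly the point you flag as ``the hardest step,'' and the mechanism you sketch for the second period does not work as stated. You say that after shifting by $q$ the star-power expression differs from the unshifted one only in that one inner application $t(\alpha_{i,0},\ldots,\alpha_{i,n-1})$ is replaced by $t(\alpha_{i,i},\ldots,\alpha_{i,i})$. But these evaluate to $\beta_i$ and $\alpha_{i,i}$ respectively, and hypothesis~(4) gives only an \emph{edge} $\alpha_{i,i}\redgev\beta_i$, not an equality. So the two expressions are not equal, and no $q$-period follows. More structurally: if your first period $p$ really comes from making every input walk $p$-periodic, then the output at position $\ell$ depends only on $\ell\bmod p$; a second coprime period $q$ would then force the output to be constant outright, and the only algebraic identity available to prove that is idempotency, which cannot identify $\beta_i$ with $\alpha_{i,i}$. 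Since $t$ has no assumed symmetry, there is no permutation of the leaves of $t^{*k}$ you can appeal to either. As written, the plan stalls here.

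The paper does not pursue two coprime periods of an output walk at all. Instead it builds a single substitution $f\colon[:n]^N\to\relstr G$ with a \emph{shift property}: for every $\mathbf x\in[:n]^N$, either there is an edge $f(\mathbf x)\redgev f(\mathbf x[1{:}]+[j])$ for all $j$, or $f(\mathbf x)=\alpha_{i,i}$ and $f(\mathbf x[1{:}]+[j])=\alpha_{i,j}$ for all $j$. From this, setting $v=t^{*N}(f)$, one observes that $v=t^{*(N+1)}(f_0)=t^{*(N+1)}(f_1)$ where $f_0,f_1$ are the right- and left-paddings of $f$ to $[:n]^{N+1}$ (idempotency collapses the padded coordinate), and then checks an edge $t^{*(N+1)}(f_0)\redgev t^{*(N+1)}(f_1)$ leafwise: the ``shift'' case gives an edge by compatibility, and the ``$\alpha$'' case gives exactly the edge $\alpha_{i,i}\redgev\beta_i$ from hypothesis~(4). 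Thus the hypothesis is used to supply an \emph{edge}, not an equality --- which is all it can do. The technical weight is entirely in constructing $f$, and that is where the periodicity lemma is actually used: not on an output walk, but on subwords of the index $\mathbf x\in[:n]^N$ (via a window of fixed width, a priority function on windows, and an analysis of local maxima) to decide which of the two cases $f$ should fall into at each $\mathbf x$.
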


Theorem~\ref{local-loop-basic} differs from Theorem~\ref{main-thm} in
the item (3), where Theorem~\ref{main-thm} allows also an undirected connected
non-bipartite graph. We start by explaining how Theorem~\ref{main-thm}
follows from Theorem~\ref{local-loop-basic},

\begin{proof}[Proof of Theorem \ref{main-thm}]
  If $\relstr G$ has cycle walks of all lengths greater than one,
  we get a loop directly by Theorem~\ref{local-loop-basic}. Assume
  that it does not, thus $\relstr G$
  is an undirected connected non-bipartite graph.
  Therefore, there is a cycle of odd length in $\relstr G$, let us
  denote the smallest odd length of such a cycle by $l$. To obtain a
  contradiction, assume that there is no loop in $\relstr G$,
  hence $l\geq 3$. Observe that $\relstr G$ contains cycle walks
  of all lengths $l'\geq l-1$: there are cycle walks of any even
  length jumping around a single edge, and cycle walks of any odd
  length greater that $l-1$ that first go around a cycle of length $l$
  and then jumps around a single edge.

  Consider the graph
  $\relstr G' = \relstr G\relpow{(l-2)}$. By
  minimality of $l$, $\relstr G'$ does not have a loop.
  Since
  $l-2$ is an odd number and $\relstr G$ is undirected, the edges of
  $\relstr G\relpow{(l-2)}$ form a superset of the edge set of
  $\relstr G$,
  hence $\relstr G'$ satisfies the item (4) of
  Theorem~\ref{local-loop-basic} about $\alpha_{i,j}$.
  Compatibility of $\relstr G'$ with $t$, that is item (2), follows
  from basic properties of relational powers. Since
  $\relstr G$ contains a cycle walk of every length greater than $l-2$,
  the digraph $\relstr G'$ contains a cycle walk of any length greater
  than 1. Therefore, by Theorem~\ref{local-loop-basic}, there is a
  loop in $\relstr G'$ corresponding to a cycle walk of length $l-2$ in
  $\relstr G$ which contradicts the minimality of $l$.
\end{proof}

The proof of Theorem~\ref{local-loop-basic} relies on the following
technical proposition.

\begin{proposition}
  \label{f-exists}
  Let $n$ be a positive integer and $\alph$ denote $[:n]$.
  Consider a strongly connected digraph $\relstr G$ that
  contains cycle walks of all lengths, and let $\alpha_{i,j}$ be any
  vertices of $\relstr G$.
  Then there is a positive integer $N$ and a mapping
  $f\colon \alph^N \to G$ (substitution to the star power) such that
  for any $\mathbf x\in \alph^N$ one of the following cases happen:
  \begin{enumerate}
  \item there is $i\in \alph$ such that
    $f(\mathbf x) = \alpha_{i,i}$ and
    \[
    \forall j\in\alph\colon f(\mathbf x[1:]+[j]) = \alpha_{i,j},
    \]
  \item for every $i\in\alph$, there is an $\relstr G$-edge
    \[
    f(\mathbf x) \to f(\mathbf x[1:]+[i]).
    \]
  \end{enumerate}
\end{proposition}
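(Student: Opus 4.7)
The plan is to construct $f$ (and implicitly choose $N$) explicitly, guided by the following unifying observation. In case~1, hypothesis~(4) gives
\[
f(\mathbf x)=\alpha_{i,i}\redgev t(\alpha_{i,0},\dots,\alpha_{i,n-1})
= t\bigl(f(\mathbf x[1{:}]+[0]),\dots,f(\mathbf x[1{:}]+[n-1])\bigr);
\]
and in case~2, compatibility of $t$ with $\relstr G$ applied to the $n$ edges $f(\mathbf x)\redgev f(\mathbf x[1{:}]+[j])$, together with the idempotency of $t$, produces the same edge from $f(\mathbf x)$ into that $t(\dots)$. So whichever alternative holds at $\mathbf x$, we end up with an edge from $f(\mathbf x)$ to the ``collapsed'' value $f'(\mathbf x[1{:}])$ appearing in the star-power reduction $t^{*N}(f)=t^{*(N-1)}(f')$. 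This is the structural reason why Proposition~\ref{f-exists} implies a loop in $\relstr G$ (by induction on the depth of the star power, using compatibility again), and it also dictates the roles of the two cases: case~1 is an ``injector'' that plants the prescribed values $\alpha_{i,j}$ at a few anchor words, and case~2 is the ``propagator'' that spreads these values through $\alph^N$ via honest edges of $\relstr G$.

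First I would choose $N$ large (the exact bound will be forced by walks below) and designate a modest set of anchor words in $\alph^N$ at which case~1 applies, together with their labels. The natural choice is to make the constant word $[i,i,\dots,i]$ an anchor with label $i$: case~1 then forces $f([i,\dots,i])=\alpha_{i,i}$ and $f([i,\dots,i,j])=\alpha_{i,j}$ for every $j\in\alph$, and this pinning is unambiguous because two distinct constant anchors have disjoint case~1 successor sets except for the neutral coincidences where the label $i$ agrees.

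Next I would extend $f$ to every non-anchor word so that the case~2 edge requirement holds there. Each non-anchor $\mathbf x$ has a well-defined ``shift distance'' to the nearest anchor obtained by iterating the shifts $\sigma_j(\mathbf x)=\mathbf x[1{:}]+[j]$, and I would define $f(\mathbf x)$ as the endpoint of a walk in $\relstr G$ that begins at a prescribed $\alpha_{i,j}$ and whose length is tied to that distance. Strong connectivity ensures such walks exist between any two vertices of $\relstr G$, while the assumption that $\relstr G$ contains cycle walks of all lengths~$\geq 2$ is what lets us freely tune the walk length so that all $n$ outgoing edges required at $\mathbf x$ land on the intended $f(\sigma_j(\mathbf x))$.

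The main obstacle is precisely this last matching: at every case~2 word $\mathbf x$, the single vertex $f(\mathbf x)$ must simultaneously admit the $n$ edges $f(\mathbf x)\redgev f(\mathbf x[1{:}]+[j])$ to values that were either prescribed by case~1 at a nearby anchor or chosen for a different non-anchor word. To control this, I would organise non-anchor words by their local period structure, invoking the periodicity lemma (Proposition~\ref{periodic-lemma}) and Corollary~\ref{subword-period} to reduce them to a bounded number of ``types'' of shift-neighbourhoods. Cycle walks of arbitrary length in $\relstr G$ then absorb any parity or length mismatch between the walks arriving at a common successor, so that once $N$ is chosen large enough to host all these walks the construction closes up, yielding an $f\colon\alph^N\to G$ satisfying the required dichotomy.
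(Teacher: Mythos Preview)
Your plan --- anchor case~1 at the constant words $[i,\dots,i]$, then fill in the rest by walks so that case~2 holds --- matches the paper's overall strategy, and the ingredients you name (strong connectivity, cycle walks of all lengths, the periodicity lemma) are exactly the ones used. But the step you correctly flag as ``the main obstacle'' is essentially the entire content of the proposition, and your proposed resolution does not work as stated. Your ``shift distance to the nearest anchor'' unwinds to the least $k$ with $\mathbf x[k{:}]$ constant; under $\mathbf x\mapsto\mathbf x[1{:}]+[j]$ this drops by~$1$ when $j=\mathbf x[N-1]$ but jumps to $N-1$ whenever $j\neq\mathbf x[N-1]$. So if $f(\mathbf x)$ is the vertex at position $d(\mathbf x)$ along some walk, the $n$ required successors $f(\mathbf x[1{:}]+[j])$ sit at wildly different positions on walks heading toward different anchors, and nothing forces a single vertex $f(\mathbf x)$ to have edges to all of them. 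Saying ``cycle walks absorb any length mismatch'' does not address this: the mismatch is not in length or parity but in the identity of the $n$ target vertices, each of which was already fixed by choices you made for other words.

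The paper's fix is to abandon any global distance-to-anchor and instead read a fixed window $\mathbf x[L{:}L{+}W]$ inside $\mathbf x$. A priority function on $\alph^W$ singles out certain positions in $\mathbf x$ as local maxima; $f(\mathbf x)$ is the window value $\nu(\mathbf x[L{:}L{+}W])$ when $L$ is itself a local maximum, and otherwise the vertex at offset $L-p$ on a fixed walk from $\nu_{\mathbf x}(p)$ to $\nu_{\mathbf x}(q)$, where $p<L<q$ are the nearest local maxima on either side. The decisive property, proved through a chain of lemmas on how $\pi_{\mathbf x}$, $\nu_{\mathbf x}$, and the local maxima behave under the shift, is that passing from $\mathbf x$ to $\mathbf x[1{:}]+[j]$ shifts each relevant local maximum by exactly one, \emph{independently of $j$}, except in a tightly controlled situation (the tail $\mathbf x[L{:}]$ is constant) that triggers the case-1 handoff to the $\alpha_{i,j}$. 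That uniformity under all $n$ shifts is precisely what your sketch is missing, and it is what makes the $n$ out-edges at every $\mathbf x$ land on a common successor along the same walk.
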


\begin{proof}[Proof of Theorem~\ref{local-loop-basic}]
  Take the substitution function $f\colon \alph^N\to \relstr G$ given
  by Proposition~\ref{f-exists}. Based on that, we define two functions
  $f_0, f_1\colon \alph^{N+1}\to \relstr G$.
  We set $f_0(\mathbf x) = f(\mathbf x[:N])$ and
  $f_1(\mathbf x) = f(\mathbf x[1:])$. Let
  \begin{align*}
    f'_0(\mathbf x) &= t(f_0(\mathbf x+[0]), \ldots, f_0(\mathbf x+[n-1])),\\
    f'_1(\mathbf x) &= t(f_1(\mathbf x+[0]), \ldots, f_1(\mathbf x+[n-1])).\\
  \end{align*}
  By idempotency of $t$, the functions $f'_0$ and $f$ are
  identical. Therefore
  \[
  t^{*(N+1)}(f_0) = t^{*N}(f'_0) = t^{*N}(f) =
  t(t^{*N}(f),t^{*N}(f),\ldots,t^{*N}(f)) = t^{*(N+1)}(f_1)
  \]
  We claim that there is an edge from $t^{*(N+1)}(f_0)$ to
  $t^{*(N+1)}(f_1)$. We verify the edge by checking an edge
  from $f'_0(\mathbf x)$ to $f'_1(\mathbf x)$ for any $\mathbf x\in\alph^N$.
  We analyze the two cases of the behavior of $f$ on $\mathbf x$.
  \begin{enumerate}
  \item If there is $i\in \alph$ such that
    $f(\mathbf x) = \alpha_{i,i}$ and
    $f(\mathbf x[1:]+[j]) = \alpha_{i,j}$ for all $j\in\alph$, then
    $f'_0(\mathbf x) = f(\mathbf x) = \alpha_{i,i}$ and
    \[
    f'_1(\mathbf x)
    = t(f(\mathbf x[1:]+[0]), \ldots, f(\mathbf x[1:]+[n]))
    = t(\alpha_{i,0},\ldots,\alpha_{i,n-1}),
    \]
    so there is an edge $f'_0(\mathbf x) \redgev f'_1(\mathbf x)$
    by definition of $\alpha_{i,j}$.
  \item If there is an edge
    \[
    f_0(\mathbf x+[i]) = f(\mathbf x) \redgev f(\mathbf x[1:]+[i]) = f_1(\mathbf x+[i])
    \]
    for every $i\in\alph$, then by compatibility of $t$ and
    $\relstr G$, there is an edge
    $f'_0(\mathbf x) \redgev f'_1(\mathbf x)$.
  \end{enumerate}
  So there is an edge from $f'_0(\mathbf x)$ to $f'_1(\mathbf x)$
  for any $\mathbf x\in\alph^N$, consequently there is an edge from
  $t^{*(N+1)}(f_0)$ to $t^{*(N+1)}(f_1)$ and since
  $t^{*(N+1)}(f_0) = t^{*(N+1)}(f_1)$, it forms the desired loop.
\end{proof}

Proposition~\ref{f-exists} will be proved in the following two
subsections. In Subsection~\ref{construction} we explicitly define
the function $f$, in Subsection~\ref{proofs} we prove that the
function $f$ satisfy the required properties. Before that we reduce
the problem to a finite case.

\begin{lemma}
  \label{finite-subgraph}
  Let $\relstr G$ be a strongly digraph that contains a cycle
  walk of every length greater than one. Let $A$ be a finite set of
  vertices in $\relstr G$. Then there is a finite subdigraph
  $\relstr G'\subset\relstr G$ that
  is strongly connected and contains a cycle walk of every
  length greater than 1 and all elements of $A$, 
\end{lemma}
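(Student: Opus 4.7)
The plan is to build $\relstr G'$ as a finite union of walks and cycle walks chosen from $\relstr G$, all anchored at a hub vertex $u$. The key numerical fact I will use is that every integer $\geq 2$ can be written as $2k+3m$ for some nonnegative integers $k,m$, so a handful of small cycle walks together with one $2$-cycle walk and one $3$-cycle walk suffice to generate cycle walks of every length $\geq 2$ in the final subdigraph.

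Concretely, I fix a hub $u\in A$ (any vertex of $\relstr G$ if $A$ is empty). By strong connectivity of $\relstr G$, for each $x\in A$ I fix walks $W_{ux},W_{xu}$ in $\relstr G$. Using the hypothesis, I pick a $2$-cycle walk $C_2$ and a $3$-cycle walk $C_3$ starting at vertices $v$ and $a$, and fix walks $W_{uv},W_{vu},W_{ua},W_{au}$. Set $P=|W_{uv}|+|W_{vu}|$, $Q=|W_{ua}|+|W_{au}|$, and $K=P+Q+1$. For each $\ell\in\{4,\ldots,K\}$ I pick a cycle walk $C_\ell$ of length $\ell$ (which exists by hypothesis) and fix walks from $u$ to its starting vertex and back. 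Then $\relstr G'$ is the finite subdigraph of $\relstr G$ consisting of all vertices and edges appearing in this collection of walks.

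Verification is routine: $\relstr G'$ contains $A$ by construction; it is strongly connected because every vertex lies on some chosen walk and therefore both reaches $u$ and is reached from $u$ by following that walk (possibly combined with its matching return walk); and for cycle walks of every length $\ell\geq 2$, either $2\leq\ell\leq K$ and $C_\ell$ is directly in $\relstr G'$, or $\ell\geq K+1=P+Q+2$, in which case I write $\ell-P-Q=2k+3m$ with $k,m\geq 0$ and concatenate $W_{uv}$, $k$ copies of $C_2$, $W_{vu}$, $W_{ua}$, $m$ copies of $C_3$, $W_{au}$ to form a cycle walk at $u$ of length exactly $P+Q+2k+3m=\ell$. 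The main obstacle to anticipate is precisely this handling of small $\ell\leq K$: the combination argument only covers $\ell\geq P+Q+2$, so cycle walks of each small length must be included explicitly, and the hypothesis that $\relstr G$ contains cycle walks of every length $\geq 2$ is exactly what makes this explicit inclusion feasible while keeping $\relstr G'$ finite.
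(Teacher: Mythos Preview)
Your proof is correct and follows essentially the same approach as the paper: build a finite ``core'' containing two coprime cycle walks (you specifically choose lengths $2$ and $3$, the paper leaves the choice abstract), then explicitly add one cycle walk of each small length below the threshold where the core takes over, and finally connect everything (including the vertices of $A$) to a hub via walks in both directions. Your version is slightly more concrete and self-contained, since you use the explicit representation $\ell-P-Q=2k+3m$ rather than appealing to the general fact that a strongly connected digraph of algebraic length~$1$ has cycle walks of all large enough lengths, but the architecture of the two arguments is identical.
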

\begin{proof}
  We start with a finite subdigraph $\relstr G_0\subset\relstr G$ with
  algebraic length one -- it suffices to put any two coprime cycle
  walks of $\relstr G$ into $\relstr G_0$ and connect them. Thus
  there is a length $C$ such that $\relstr G_0$ contains a $c$-cycle
  walk for any $c\geq C$. We construct $\relstr G'$ by adding the
  following edges and nodes into $\relstr G_0$:
  \begin{itemize}
  \item one cycle walk of every length in the interval $[2:C]$,
  \item all the vertices in $A$,
  \item paths connecting the elements in previous items to a fixed
    node in $\relstr G_0$ and vice versa.
  \end{itemize}
  These are finitely many edges and vertices in total. The final
  $\relstr G'$ is therefore finite while it meets the required criteria.
\end{proof}

\subsection{Construction of the substitution $f$}
\label{construction}

In this section, we construct a witness to the
Proposition~\ref{f-exists}. In particular, we consider the digraph
$\relstr G$, positive integer $n$ and vertices $\alpha_{i,j}$ and
define an appropriate integer $N$ and a function
$f\colon \alph^N\to\relstr G$, where $\alph$ denotes $[:n]$.
By Lemma~\ref{finite-subgraph}, we
can assume that $\relstr G$ is finite.
Consequently, we can use Proposition~\ref{large-enough-walks} and get
$K$ such that
$K\geq 2$ and there are $k$-walks from $v_0$ to $v_1$ for any
$v_0,v_1\in\relstr G$ and $k\geq K$. For every $v_0,v_1,k$, we fix
such a walk and denote it by $\walk(v_0,v_1,k)$.
We define the length $N$ as $N=L+W+R$ (left, window, right),
where
\begin{itemize}
\item $W = 3K-3$,
\item $M = 2(K-1)\cdot n^W + (K-1)$.
\item $R = M+K-1$,
\item $L = R+K-2$,
\end{itemize}

The overall idea is to evaluate $f(\mathbf x)$ primarily by the
``window'' $\mathbf x[L:L+W]$, or to investigate the neighborhood of this
window, if necessary.
We start with constructing a \emph{priority function}
$\pi\colon \alph^W\to \Z$ and a \emph{value function}
$\nu\colon \alph^W\to \relstr G$ of the following properties.

\begin{enumerate}
\item\label{itm:w-constant} If $\mathbf w$ is constant $[i,\ldots,i]$, then
  $\nu(\mathbf w) = \alpha_{i,i}$ and $\pi(\mathbf w) = 0$.
\item\label{itm:w-periodic} If $\mathbf w$ is periodic with the shortest period
  $k \in [2:K]$, then $\pi(\mathbf w)=R$ and there is a $k$-cycle walk
  $[v_0,v_1,\ldots,v_k]$ in $\relstr G$ such that
  \[\nu(\mathbf w[i:]+\mathbf w[:i]) = v_i\]
  for every $i\in[:n]$.
\item\label{itm:w-almost-const} If $\mathbf w[:W-1]$ is constant but $\mathbf w$ is not, then
  $\pi(\mathbf w) = R$.
\item\label{itm:pi-negative} If $\mathbf w$ is not periodic with a period smaller than $K$
  and $\mathbf w[:W-1]$ is not constant, then $\pi(\mathbf w)$ is negative.
\item\label{itm:pi-injective} $\pi$ is ``injective on negative values'', that is, whenever
  $\pi(\mathbf w_0) = \pi(\mathbf w_1) < 0$, then
  $\mathbf w_0 = \mathbf w_1$.
\end{enumerate}
The construction of such functions is straightforward. To satisfy the
conditions \ref{itm:w-constant}, \ref{itm:w-almost-const} we simply set
the appropriate values of $\pi$ and $\nu$. The items
\ref{itm:pi-negative} \ref{itm:pi-injective} can be satisfied since
there are infinitely many negative numbers and just finitely many
possible words of length $W$. Finally, to meet the condition
\ref{itm:w-periodic}, for all $k\in[2:K]$, we partition the words with
the smallest period $k$ into groups that differs by a cyclic
shift. Any such group can be arranged as
$\mathbf w_0, \mathbf w_1, \ldots, \mathbf w_{k-1}$ where
$\mathbf w_i = \mathbf w_0[i:] + \mathbf w_0[:i]$. For every such a
group, we find a $k$-cycle $[v_0,v_1\ldots,v_k]$ in $\relstr G$ and
set $\nu(\mathbf w_i) = v_i$, $\pi(\mathbf w_i) = R$.

Now consider a word $\mathbf x\in\alph^N$ of length $N$. Positions
$p\in[:N-W+1]$ represent valid indices of subwords $\mathbf x[p:p+W]$
of length $W$. We define values $\nu_{\mathbf x}\colon
[:N-W+1]\to\relstr G$ and priorities $\pi_{\mathbf x}\colon
[:N-W+1]\to\relstr \Z$ of such positions by
\[
\nu_{\mathbf x}(p) = \nu(\mathbf x[p:p+W]),\quad
\pi_{\mathbf x}(p) = \pi(\mathbf x[p:p+W])
\]
with the following exceptions if $\mathbf x[p:p+W]$ is constant.
\begin{itemize}
\item Let $q\in [p:N-W+1]$ be the right-most position such that
  $\mathbf x[p:q+W]$ is constant. We redefine
  $\pi_{\mathbf x}(p)$ to be $\min(q-p, R-1)$ instead of zero.
\item If $p \in [1:L+1]$, $\mathbf x[p-1:p-1+W+R]$ is a constant word
  $[i,\ldots,i]$ and
  $\mathbf x[p-1+W+R] = j$, we redefine $\nu_{\mathbf x}(p)$ to be $\alpha_{i,j}$
  instead of $\alpha_{i,i}$.
\end{itemize}

Based on the priority function $\pi_{\mathbf x}\colon[:N-W+1]\to \Z$,
we define local maxima. A position $p\in [:N-W+1]$ is called a
\emph{local maximum} in $\mathbf x\in\alph^N$
if either $\pi_{\mathbf x}(p)=R$,
or $p\in [K-1:N-W-(K-1)+1]$ and
$\pi_{\mathbf x}(p)\geq \pi_{\mathbf x}(q)$
whenever $|p-q| < K$.

We are finally ready to construct the function
$f\colon \alph^N \to\relstr G$. If $L$ is a local maximum in
$\mathbf x$, we simply set $f(\mathbf x) = \nu_{\mathbf x}(L)$.
Otherwise, we find the closest local maxima to $L$ from both sides.
In particular let $p<L$ be the right-most local maximum before $L$,
and let $q>L$ be the left-most local maximum after $L$. We claim that
these positions exist and that
$q-p\geq K$, these claims are proved in the following subsection.
In that case we set
\[
f(\mathbf x) = \walk(\nu_{\mathbf x}(p), \nu_{\mathbf x}(q), q-p)[L-p].
\]

\subsection{Proofs}
\label{proofs}

In this section, we fill the missing proofs in the construction. In
particular, we prove the following.
\begin{itemize}
\item It is possible to find a local maximum on both sides of the
  position $L$ in any $\mathbf x\in \alph^N$
  (Corollary~\ref{local-max-around-L}).
\item If $L$ is not a local maximum and $p,q$ are local maxima such
that $p<L<q$, then $q-p\geq K$ (Corollary~\ref{walk-long-enough}).
\item The constructed mapping $f$ meets the criteria given by
Proposition~\ref{f-exists} (Proposition~\ref{seq-map-valid}).
\end{itemize}

\begin{lemma}
  \label{close-local-max}
  Let $p < q$ be local maxima in $\mathbf x$ such that $q-p < K$. Then
  $\nu_{\mathbf x}(p) = \nu_{\mathbf x}(q)\geq R-1$ and the segment
  $\mathbf x[p:q+W]$ is periodic with a period strictly less than $K$.
\end{lemma}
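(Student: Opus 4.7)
The plan is a case analysis on the common priority value $\pi_{\mathbf x}(p) = \pi_{\mathbf x}(q)$. I would first argue that the two priorities must agree: if $\pi_{\mathbf x}(p) = R$ (so $p$ is a local maximum via the first clause) but $q$ qualifies only through the second clause, then $p$ lying within distance $K$ of $q$ forces $\pi_{\mathbf x}(q) \geq R$ through that second clause; if both qualify via the second clause, mutual bounding gives equality directly.

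Next, I would rule out the common value falling below $R - 1$. A negative value is impossible because injectivity on negatives (item \ref{itm:pi-injective}) would give $\mathbf x[p:p+W] = \mathbf x[q:q+W]$, making $\mathbf x[p:p+W]$ periodic of period $q - p < K$, which contradicts item \ref{itm:pi-negative} (negatives are assigned only to windows with no period $< K$). A value in $[0, R-2]$ is also impossible: both windows would then be constant with the same value (their overlap of length $> W - K$ forces agreement), sharing the same rightmost-endpoint $q^*$ of the maximal constant run; requiring $\min(q^* - p, R - 1) = \min(q^* - q, R - 1) < R - 1$ would force $p = q$.

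The remaining possibilities are common priority $R - 1$ or $R$. In the $R - 1$ case, both windows sit inside a constant run extending at least $W + R - 1$ past $p$, so $\mathbf x[p:q+W]$ is constant (period $1 < K$); $\nu$-equality then follows by tracking whether the boundary exception triggers — it either triggers at both positions with the same $j$ (determined by a position inside the common constant block) or fails at both. In the $R$ case, I would first eliminate the "almost constant" sub-cases by overlap arguments: two overlapping almost-constant windows force a contradiction at the position immediately before the end of one of the constant prefixes, and an almost-constant window overlapping a $k$-periodic window puts a constant subword of length at least $2K - 2 \geq 2k - 2$ inside the periodic window, forcing $k = 1$ via Corollary \ref{subword-period}. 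Hence both windows are $k$-periodic with shortest period $k \in [2:K]$. Corollary \ref{subword-period} applied to the overlap $\mathbf x[q:p+W]$, of length $W - (q-p) \geq 2K - 2$, forces a common shortest period $k$, and gluing along the overlap shows $\mathbf x[p:q+W]$ itself is $k$-periodic with $k < K$. The $\nu$-equality is then read off from the cycle-walk labeling fixed by the construction for this cyclic-shift class of $k$-periodic patterns.

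The main obstacle I expect lies in the last step: the windows $\mathbf x[p:p+W]$ and $\mathbf x[q:q+W]$ differ by a cyclic shift of the $k$-periodic pattern through $(q - p) \bmod k$, so their labels sit on the chosen $k$-cycle walk at offsets $0$ and $(q-p) \bmod k$; making these agree is a nontrivial extra constraint that either forces $k \mid (q-p)$ from the local-max hypothesis or produces a loop in $\relstr G$ directly, which meshes with the spirit of the main theorem.
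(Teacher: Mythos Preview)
Your worry in the final paragraph stems from a typo in the paper: every $\nu_{\mathbf x}$ in the lemma statement (and in the paper's own proof of it) should read $\pi_{\mathbf x}$. Note that $\nu_{\mathbf x}(p)\geq R-1$ is not even type-correct, since $\nu_{\mathbf x}$ takes values in $\relstr G$; and both downstream uses of the lemma (Corollary~\ref{walk-long-enough} and Proposition~\ref{seq-map-valid}) invoke only the priority equality $\pi_{\mathbf x}(p)=\pi_{\mathbf x}(q)$. So the ``nontrivial extra constraint'' you anticipate in the $k$-periodic case is simply not part of the claim; in that case $\nu_{\mathbf x}(p)$ and $\nu_{\mathbf x}(q)$ are typically distinct vertices on the chosen $k$-cycle, and this is fine. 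You should drop all the $\nu$-equality discussion (including in the $R-1$ case, where your exception-tracking argument is also fragile: the range condition $p\in[1:L+1]$ need not transfer to $q$).

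Once the target is read correctly, your case analysis on the common priority value is valid and establishes both $\pi_{\mathbf x}(p)=\pi_{\mathbf x}(q)\geq R-1$ and the periodicity of $\mathbf x[p:q+W]$. The paper's proof is organized differently and more economically: after ruling out a negative common priority exactly as you do, it observes in one stroke that $\pi_{\mathbf x}(p)\geq 0$ forces the truncated window $\mathbf x[p:p+W-1]$ to have a period $<K$ (this covers the constant, almost-constant, and $k$-periodic sub-cases simultaneously, since dropping the last letter kills the almost-constant distinction). It then applies the Fine--Wilf periodicity lemma (Proposition~\ref{periodic-lemma}) directly to the overlap $\mathbf x[q:p+W-1]$ of the two truncated windows to force a common shortest period on $\mathbf x[p:q+W-1]$, and only afterward splits into constant (yielding $\pi=R-1$ via the $\min(R-1,\,\cdot)$ formula) versus non-constant (yielding $\pi=R$ and extending the period to the full $\mathbf x[p:q+W]$). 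Your route---separately disposing of priorities in $[0:R-1]$, then inside priority $R$ separately eliminating almost-constant/periodic mixes---reaches the same conclusion but carries more bookkeeping that the paper's ``drop the last letter first, case-split last'' organization avoids.
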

\begin{proof}
  Since both $p$, $q$ are local maxima,
  $\nu_{\mathbf x}(p) = \nu_{\mathbf x}(q)$.
  First we prove that $\nu_{\mathbf x}(p)\geq 0$.
  To obtain a contradiction, suppose that $\nu_{\mathbf x}(p) < 0$.
  Then $\mathbf x[p:p+W] = \mathbf x[q:q+W]$ by injectivity of the
  function $\nu$ on negative values. Hence $\mathbf x[p:q+W]$ is
  periodic with a period $q-p < K$. That contradicts the assumption
  that $\nu_{\mathbf x}(p) < 0$.

  Therefore $\nu_{\mathbf x}(p) \geq 0$, and both subwords
  $\mathbf w_0 = \mathbf x[p:p+W-1]$ and
  $\mathbf w_1 = \mathbf x[q:q+W-1]$ are periodic with
  periods less than $K$, let us denote their shortest periods $k_0$,
  $k_1$ respectively.
  Their intersection $\mathbf w = \mathbf x[q:p+W-1]$ has length at least
  \[
  (W-1)-(K-1) = (K-1)+(K-1)-1 \geq k_0+k_1-\gcd(k_0, k_1),
  \]
  so it is $\gcd(k_0, k_1)$-periodic by
  Proposition~\ref{periodic-lemma}.
  Since $|\mathbf w|\geq \max(k_0,k_1)$ and the subwords $\mathbf w_0$, $\mathbf w_1$ are $k_0$-periodic
  or $k_1$-periodic, they are uniquely
  determined by $\mathbf w$. Therefore, the whole subword $\mathbf x[p:q+W-1]$
  is $\gcd(k_0, k_1)$-periodic and $\gcd(k_0, k_1) = k_0 = k_1$.

  If $\mathbf x[p:q+W-1]$ is not constant, then $k_0 \geq 2$,
  and $\mathbf w_1$ is not constant. Since
  $\nu_{\mathbf x}(q)\geq 0$, the word $\mathbf x[q:q+W]$ is then periodic with
  a period less that $K$. By the same reasoning as above, the shortest
  period of $\mathbf x[q:q+W]$ is $k_0$ and the whole part
  $\mathbf x[p:q+W]$ is $k_0$-periodic.

  Otherwise, if $\mathbf x[p:q+W-1]$ is constant, then
  $R > \nu_{\mathbf x}(p) = \nu_{\mathbf x}(q)$, so $\mathbf x[q:q+W]$ is
  constant, and the whole part $\mathbf x[p:q+W]$ is constant.
  Thus $\nu_{\mathbf x}(p) = \min(R-1, \nu_{\mathbf x}(q) + q-p)$, so
  $\nu_{\mathbf x}(p) = \nu_{\mathbf x}(q) = R-1$.
\end{proof}

\begin{corollary}
  \label{walk-long-enough}
  Let $\mathbf x\in \alph^N$ be a word such that
  $L$ is not a local maximum in $\mathbf x$, and let $p,q$ be local maxima
  such that $p<L<q$. Then $q-p\geq K$.
\end{corollary}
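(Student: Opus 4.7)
My plan is a proof by contradiction: assuming $q-p<K$, I would derive that $L$ must itself be a local maximum, contradicting the hypothesis. The starting point is Lemma~\ref{close-local-max} applied to the pair $(p,q)$, which yields that $\mathbf x[p:q+W]$ is periodic with some shortest period $k<K$ and that the priorities satisfy $\pi_{\mathbf x}(p)=\pi_{\mathbf x}(q)\geq R-1$. I would then split on whether $k\geq 2$ or $k=1$.

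If $k\geq 2$, the subword $\mathbf x[L:L+W]$ has length $W=3K-3\geq 2k-2$ and inherits $k$-periodicity, so Corollary~\ref{subword-period} upgrades $k$ to its shortest period. In particular this window is non-constant, no constant-exception applies in the definition of $\pi_{\mathbf x}(L)$, and property~\ref{itm:w-periodic} of the priority function gives $\pi_{\mathbf x}(L)=R$ directly. Hence $L$ is a local maximum via the first clause of the definition.

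The delicate case is $k=1$, where $\mathbf x[p:q+W]$ is constant and hence so is $\mathbf x[L:L+W]$. Here the constant exception governs $\pi_{\mathbf x}$ at each of $p$, $L$, $q$, with a common rightmost-extension position $q^\ast$ that depends only on where the constant run finally breaks. Since constant priorities are capped at $R-1$, the bound from the lemma sharpens to $\pi_{\mathbf x}(p)=\pi_{\mathbf x}(q)=R-1$, and a short calculation against $q^\ast$ yields $\pi_{\mathbf x}(L)=R-1$ as well. Because $\pi_{\mathbf x}(p),\pi_{\mathbf x}(q)<R$, the two flanking maxima can satisfy the local-maximum definition only through its second clause, so $p,q\in[K-1:N-W-K+2]$, and consequently so does $L$. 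A triangle-inequality bookkeeping then shows that every $r$ with $|r-L|<K$ lies within $K$ of $p$ or of $q$, which gives $\pi_{\mathbf x}(r)\leq R-1=\pi_{\mathbf x}(L)$; thus $L$ satisfies the neighborhood clause, contradicting the assumption.

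The main obstacle I expect is this constant case: one must pin $\pi_{\mathbf x}(L)$ at \emph{exactly} the height $R-1$ of its two flanking maxima rather than at some strictly smaller value, and then exploit the range $[K-1:N-W-K+2]$ to check that the $K$-neighborhood of $L$ is covered by the union of the $K$-neighborhoods of $p$ and $q$. Both steps rely on the precise priority ceiling and on the specific choice of the constants $W$, $L$, $R$, $N$ fixed in the construction.
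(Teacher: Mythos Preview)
Your argument is correct and follows the same contradiction scheme as the paper: apply Lemma~\ref{close-local-max}, split into the non-constant and constant cases, and in each case conclude that $\pi_{\mathbf x}(L)\in\{R,R-1\}$ forces $L$ to be a local maximum. The paper's own proof is terser---it simply asserts ``in both cases, $L$ is a local maximum''---whereas you spell out the constant case fully (pinning $\pi_{\mathbf x}(L)=R-1$ via the shared rightmost-extension point, placing $L$ in the admissible range $[K-1:N-W-K+2]$ from $p<L<q$, and covering the $K$-neighborhood of $L$ by those of $p$ and $q$); this extra bookkeeping is exactly what the paper leaves implicit, so your version is if anything more complete. One minor simplification: in the $k\geq2$ branch you do not really need Corollary~\ref{subword-period}---it suffices that $\mathbf x[L:L+W]$ inherits a period $<K$ and has length $\geq K$, hence is non-constant, so its shortest period lies in $[2:K]$ and item~\ref{itm:w-periodic} gives $\pi_{\mathbf x}(L)=R$ directly.
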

\begin{proof}
  Conversely suppose that $q-p < K$. By~Lemma~\ref{close-local-max},
  $\mathbf x[p:q+W]$ is periodic with a period strictly less than $K$ and
  $\pi_{\mathbf x}(p) = \pi_{\mathbf x}(q) \geq R-1$.
  If $\mathbf x[p:q+W]$ is constant, $\pi_{\mathbf x}(L) = R-1$, else
  $\pi_{\mathbf x}(L) = R$. In both cases, $L$ is a local maximum
  contrary to the assumption.
\end{proof}

\begin{lemma}
  \label{local-max-check-on-right}
  Let $p\in[K-1:N-W-(K-1)+1]$ be such that $\mathbf x[p:p+W]$ is
  constant. If $p$ is not a local maximum, then one of the following
  scenarios happen.
  \begin{enumerate}
  \item $\pi_{\mathbf x}(p) < R-1$ and $\mathbf x[p-1] = \mathbf x[p]$,
  \item there is a local maximum $q > p$ such that $q-p < K$,
    $\mathbf x[p:q+W-1]$ is constant and different from
    $\mathbf x[q+W-1]$.
  \end{enumerate}
\end{lemma}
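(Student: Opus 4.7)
The plan is to analyze what a witness $r$ to $p$'s failure to be a local maximum can look like, and then match the two possible shapes with the two conclusions of the lemma. Let $p_*\leq p$ and $q_*\geq p$ be chosen so that $\mathbf x[p_*:q_*+W]$ is the maximal constant run of value $i=\mathbf x[p]$ extending $\mathbf x[p:p+W]$. Then $\pi_{\mathbf x}(p) = \min(q_*-p,\,R-1) < R$, and since $p$ fails to be a local maximum there is some position $r\in[:N-W+1]$ with $|r-p|<K$ and $\pi_{\mathbf x}(r) > \pi_{\mathbf x}(p) \geq 0$.

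The core step is to constrain the shape of $\mathbf x[r:r+W]$ for such a witness $r$, going through the conditions (\ref{itm:w-constant}), (\ref{itm:w-almost-const}), (\ref{itm:w-periodic}) that can produce $\pi_{\mathbf x}(r) \geq 0$. Because $|r-p|<K$ and $W=3K-3$, the index intervals $[r:r+W]$ and $[p:p+W]$ overlap in at least $W-K+1 = 2K-2$ positions, which is at least $k$ for every admissible period $k\in[2:K]$. If $\mathbf x[r:r+W]$ is constant, the constant must be $i$ by the overlap, so $r\in\{p_*,\ldots,q_*\}$. If $\mathbf x[r:r+W-1]$ is constant but $\mathbf x[r:r+W]$ is not, the constant is again $i$ and $\mathbf x[r+W-1]\neq i$; for $r<p$ the position $r+W-1$ still lies inside $[p:p+W]$, forcing $\mathbf x[r+W-1] = i$ and contradicting almost-constancy, so only $r=q_*+1$ survives. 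If $\mathbf x[r:r+W]$ is periodic with shortest period $k\in[2:K]$, the overlap exhibits a constant subrun of length $\geq k$ inside a $k$-periodic word, forcing the whole word to be constant and contradicting $k\geq 2$. Every witness therefore lies in $\{p_*,\ldots,q_*\}$ or equals $q_*+1$.

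From this dichotomy I read off the two conclusions. First I rule out $\pi_{\mathbf x}(p) = R-1$: a witness must satisfy $\pi_{\mathbf x}(r) = R$ and hence equal $q_*+1$, but $q_*-p\geq R-1$ gives $q_*+1-p\geq R\geq K$, contradicting $|r-p|<K$. So $\pi_{\mathbf x}(p) = q_*-p < R-1$. If the witness lies in $\{p_*,\ldots,q_*\}$, then $\pi_{\mathbf x}(r) = \min(q_*-r,\,R-1) > q_*-p$ forces $r<p$, hence $p_*\leq p-1$ and in particular $\mathbf x[p-1]=\mathbf x[p]$; this is case (1). Otherwise the witness is $r=q_*+1$, and setting $q=q_*+1$ gives $q>p$ with $q-p<K$, $\pi_{\mathbf x}(q)=R$ (so $q$ is automatically a local maximum), $\mathbf x[p:q+W-1]=\mathbf x[p:q_*+W]$ constant, and $\mathbf x[q+W-1]=\mathbf x[q_*+W]\neq i$ by maximality of $q_*$; this is case (2).

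The main obstacle is the middle paragraph: excluding potential witnesses $r<p$ with $\pi_{\mathbf x}(r)=R$ coming from either the almost-constant condition (\ref{itm:w-almost-const}) or the short-periodic condition (\ref{itm:w-periodic}). Both exclusions rely on the choice $W\geq 2(K-1)$, which makes the overlap with the constant run long enough to either pull a forbidden non-$i$ letter into the constant run or to combine with $k$-periodicity to force full constancy. Once these spurious witnesses are ruled out, matching the remaining ones to cases (1) and (2) is routine bookkeeping on $p_*$, $q_*$, and $\pi_{\mathbf x}$.
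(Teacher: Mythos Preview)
Your proof is correct and follows essentially the same strategy as the paper's: both take a witness $r$ with $|r-p|<K$ and $\pi_{\mathbf x}(r)>\pi_{\mathbf x}(p)$, use the overlap of the two windows together with periodicity to force $\mathbf x[r:r+W-1]$ to be constant with value $i$, and then read off scenario~(1) when $r<p$ and scenario~(2) when $r>p$. Your version is a bit more explicit---introducing the maximal constant run $[p_*:q_*+W]$ and splitting by the three ways $\pi_{\mathbf x}(r)\geq 0$ can arise---whereas the paper argues in one stroke that $\pi_{\mathbf x}(r)>0$ forces $\mathbf x[r:r+W-1]$ to have a period below $K$, and hence to be constant by the overlap.
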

\begin{proof}
  Since $p$ is not a local maximum, there is a position $q$ such that $|p-q| < K$ and $\nu_{\mathbf x}(q) > \nu_{\mathbf x}(p)\geq 0$, hence $\mathbf x[q:q+W-1]$ is periodic with a period smaller than $K$. The subword $\mathbf x[q:q+W-1]$ has an intersection with the constant subword $\mathbf x[p:p+W]$ of length at least $(W-1)-(K-1) = 2K-3\geq K-1$. Therefore by periodicity, $\mathbf x[q:q+W-1]$ is constant as well.

  We analyze two cases by the position of $q$.
  \begin{enumerate}
  \item If $q < p$, then $q+W-1 \in [p:p+W]$, so $\mathbf x[q:q+W]$ is still constant. Therefore $\pi_{\mathbf x}(q)\leq R-1$, and consequently $\pi_{\mathbf x}(p) < R-1$. Moreover $\mathbf x[p] = \mathbf x[p-1]$ and we get the scenario (1).
  \item If $q > p$, we show that $\mathbf x[q+W-1]$
  differs from the constant on $\mathbf x[q:q+W-1]$, so the scenario (2) happens. If it did not, the whole segment $\mathbf x[p:q+W]$ would be constant, and $\nu_{\mathbf x}(p) = \min(R-1, \nu_{\mathbf x}(q) + q-p)$ would contradict $v_{\mathbf x}(q) > \nu_{\mathbf x}(p)$.
  \end{enumerate}
\end{proof}

\begin{corollary}
  \label{local-max-in-const}
  Let $p\in[K-1:N-W-(K-1)+1]$ be such a position that the segment
  $\mathbf x[p:p+W+(K-1)]$ is constant. Assume that
  $\mathbf x[p] \neq \mathbf x[p-1]$ or $\nu_{\mathbf x}(p) = R-1$.
  Then $p$ is a local maximum.
\end{corollary}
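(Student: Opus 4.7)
The plan is to argue directly from Lemma~\ref{local-max-check-on-right}, using its two forbidden scenarios as the only alternatives to $p$ being a local maximum. Since $\mathbf x[p:p+W+(K-1)]$ is constant, in particular its prefix $\mathbf x[p:p+W]$ is constant, so the hypothesis of Lemma~\ref{local-max-check-on-right} is satisfied at $p$. I would suppose for contradiction that $p$ is not a local maximum; then scenario (1) or scenario (2) of that lemma must hold, and I will knock out each in turn.

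Ruling out scenario (1) is immediate from the corollary's hypothesis. Scenario (1) requires both $\mathbf x[p-1]=\mathbf x[p]$ and $\pi_{\mathbf x}(p)<R-1$; the assumption that either $\mathbf x[p]\neq\mathbf x[p-1]$ or $\pi_{\mathbf x}(p)=R-1$ (written $\nu_{\mathbf x}(p)=R-1$ in the statement, matching the priority on a constant window) directly blocks each disjunct of the conjunction.

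The slightly less trivial step is excluding scenario (2). There we would obtain a local maximum $q$ with $p<q<p+K$ such that $\mathbf x[p:q+W-1]$ is constant but $\mathbf x[q+W-1]$ differs from that constant value. However, $q\leq p+K-1$ gives $q+W-1\leq p+W+K-2$, so the position $q+W-1$ still lies inside the guaranteed constant segment $\mathbf x[p:p+W+(K-1)]$. Therefore $\mathbf x[q+W-1]$ equals the same constant, contradicting scenario (2).

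With both alternatives excluded, $p$ must be a local maximum. The only real point to check carefully is the half-open interval bookkeeping in the second step, to make sure $q+W-1$ really lands inside the constant block; aside from that, the corollary is a clean specialization of Lemma~\ref{local-max-check-on-right}.
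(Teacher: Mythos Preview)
Your proposal is correct and is exactly the intended argument: the paper states this as a corollary of Lemma~\ref{local-max-check-on-right} without proof, and your write-up simply fills in the two cases of that lemma in the natural way, including the index check that $q+W-1\le p+W+K-2$ lands inside the constant block. Your remark that $\nu_{\mathbf x}(p)=R-1$ should be read as the priority $\pi_{\mathbf x}(p)=R-1$ is also right; this is a recurring $\nu/\pi$ typo in the surrounding lemmas.
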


\begin{lemma}
  \label{local-max-nonconst-interval}
  Consider positions $p_0, p_1\in[K-1:N-W+1]$ in a word
  $\mathbf x$ such that $p_1-p_0 \geq M$. If $\mathbf x[p_0:p_1+W]$ is not constant,
  there is a local maximum in $\mathbf x$ in the interval $[p_0:p_1+1]$.
\end{lemma}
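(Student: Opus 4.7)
The plan is to argue by contradiction: assume no position of $I := [p_0:p_1+1]$ is a local maximum, and deduce that $\mathbf{x}[p_0:p_1+W]$ must be constant, contradicting the hypothesis. First, the absence of a local maximum in $I$ rules out any $p \in I$ with $\pi_{\mathbf{x}}(p)=R$, so no window $\mathbf{x}[p:p+W]$ with $p \in I$ is periodic of shortest period in $[2:K]$ (item~(\ref{itm:w-periodic})) nor left-almost-constant in the sense of item~(\ref{itm:w-almost-const}). I would then establish two extension principles for constant windows in $I$. \emph{Left extension:} if $p \in I$ satisfies $p \leq p_1-K+1$ and $\mathbf{x}[p:p+W]$ is constant, Lemma~\ref{local-max-check-on-right} applies; its scenario~(2) would supply a local maximum at some $q \in (p, p+K) \cap I$, contradicting our assumption, so scenario~(1) forces $\mathbf{x}[p-1]=\mathbf{x}[p]$. \emph{Right extension:} if $p \in I$ has constant window of letter $i$ and $p+1 \in I$, then $\mathbf{x}[p+1:p+1+W]$ begins with $W-1$ copies of $i$; a different last letter would make this window left-almost-constant, forcing $\pi_{\mathbf{x}}(p+1)=R$ and so a local maximum in $I$ — contradiction; hence $\mathbf{x}[p+W]=i$.

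If some $p \in I \cap [p_0:p_1-K+1]$ carries a constant window, iterating the left extension stays inside $I \cap [p_0:p_1-K+1]$ and propagates the constancy leftward to $\mathbf{x}[p_0-1:p+W]$, and iterating the right extension stays inside $I$ and cannot halt at the almost-constant boundary (by the elimination of $\pi=R$), so it extends the constancy up to $\mathbf{x}[p_1+W-1]$. This forces $\mathbf{x}[p_0:p_1+W]$ to be constant — the desired contradiction. Therefore no $p \in I \cap [p_0:p_1-K+1]$ carries a constant window. This sub-interval contains $p_1 - p_0 - K + 2 \geq M - K + 2 = 2(K-1)n^W + 1$ positions, each with a window belonging to item~(\ref{itm:pi-negative}) of the priority function: non-constant, not left-almost-constant, not periodic of shortest period in $[2:K]$. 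Consequently each such window has shortest period at least $K+1$, negative $\pi_{\mathbf{x}}$, and distinct windows carry distinct $\pi$-values by property~(\ref{itm:pi-injective}). I would finish by pigeonhole over the $\leq n^W$ possible windows: some window $w$ occurs at $\geq 2(K-1)+1 = 2K-1$ positions of the sub-interval, and any two occurrences $q<q'$ with $q'-q\leq K$ would make $\mathbf{x}[q:q+W]=w$ have period $q'-q \leq K$, contradicting shortest period $\geq K+1$. Hence same-$w$ positions are pairwise at least $K+1$ apart, and the precise calibration $M = 2(K-1)n^W+(K-1)$ is chosen so that $2K-1$ such occurrences with this spacing cannot be accommodated, yielding the final contradiction.

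The technical heart of the argument is the last step — a careful check that the calibration of $M$ really forces the spacing-plus-pigeonhole contradiction. A minor boundary case also has to be handled: if all constant windows of $I$ lie in the rightmost strip $[p_1-K+2:p_1]$ (where left extension is unavailable), iterating right extension from the leftmost such position $p_\#+1$ still produces a constant stretch $\mathbf{x}[p_\#+1:p_1+W]$ of some letter $i$ with $\mathbf{x}[p_\#]\neq i$, and the pigeonhole argument then applies to the still-long sub-interval $[p_0:p_\#]$.
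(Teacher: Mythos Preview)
Your left/right extension arguments for propagating constant windows are correct, and they validly reduce the problem to the situation in which no position of your sub-interval carries a constant window. The paper handles the constant-window case differently---by locating the maximal constant segment and reading off a local maximum at its boundary via Corollary~\ref{local-max-in-const} or the almost-constant case---but your route through Lemma~\ref{local-max-check-on-right} is fine.

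The genuine gap is the final pigeonhole step. First a minor point: item~(\ref{itm:pi-negative}) only forbids periods strictly smaller than $K$, so the shortest period of such a window is $\geq K$, not $\geq K+1$ (item~(\ref{itm:w-periodic}) covers shortest period in $[2:K]=\{2,\dots,K-1\}$). More seriously, the spacing-plus-pigeonhole count does not close. You have at least $2(K-1)n^W+1$ positions and at most $n^W$ distinct windows, so some window recurs at $\geq 2K-1$ positions, and consecutive occurrences of it are at distance $\geq K$. But $2K-1$ points with gaps $\geq K$ span only $(2K-2)\cdot K = 2K(K-1)$, whereas the sub-interval spans $2(K-1)n^W$; to overflow it you would need $K>n^W$, which already fails at $n=2$, $K=2$ (then $W=3$ and $n^W\geq 8$). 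The calibration $M=2(K-1)n^W+(K-1)$ simply does not encode this kind of bound, so the ``careful check'' you defer would not go through.

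What $M$ is actually calibrated for is a \emph{greedy climb}, and that is how the paper finishes the non-constant case. Start at the midpoint $q_0=p_0+(K-1)n^W$; while $q_i$ is not a local maximum, choose $q_{i+1}$ with $|q_{i+1}-q_i|<K$ and $\pi_{\mathbf x}(q_{i+1})>\pi_{\mathbf x}(q_i)$. The priorities along the climb are strictly increasing and (since no window there is constant) either negative or equal to $R$; by injectivity of $\pi$ on negative values the corresponding windows are pairwise distinct, so the climb has at most $n^W$ steps and drifts at most $(K-1)n^W$ from $q_0$. Hence it stays inside $[p_0:p_0+2(K-1)n^W+1]\subset[p_0:p_1+1]$ and terminates at a local maximum. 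The injectivity you invoked is precisely the engine here, but it bounds the length of an increasing chain of priorities, not the multiplicity of any single window; your pigeonhole deploys it in the wrong direction.
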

\begin{proof}
  If there is no such a position $q\in[p_0:p_0+2(K-1)\cdot n^W+1]$ that
  $\mathbf x[q:q+W]$ is constant, we find the local maximum by the
  following process.
  We start with the position $q_0 = p_0+(K-1)n^W$. While $q_i$ is not a local
  maximum, we find $q_{i+1}$ such that $|q_{i+1} - q_i| \leq K-1$ and
  $\pi_{\mathbf x}(q_{i+1}) > \pi_{\mathbf x}(q_i)$. Observe that the positions
  $q_1, q_2, \ldots, q_{n^W}$ cannot escape the interval
  $[p_0:p_0+2(K-1)\cdot n^W+1]$.
  On the other hand, the process cannot have more than $n^W$ steps
  since the values $\pi_{\mathbf x}(p_i)$ form an increasing sequence
  which is made of at most $n^W$ negative values and one non-negative
  value $R$. So we will get to the local maximum eventually.

  If there is a position $q\in[p:p+2(K-1)\cdot n^W+1]$ such that
  $\mathbf x[q:q+W]$ is constant, we find $q_0, q_1$ such that
  $p_0\leq q_0\leq q \leq q_1 \leq p_1$ and
  $\mathbf x[q_0:q_1+W]$ is the largest possible constant segment
  containing the position $q$.
  If $q_1 < p_1$, then $\pi_{\mathbf x}(q_1+1) = R$, hence
  $q_1+1$ is a local maximum in $[p_0:p_1+1]$.
  Assume otherwise that $q_1 = p_1$.
  Since $\mathbf x[p_0:p_1+W]$ is not constant, we have
  $q_0 > p_0$. Since $p_1-q_0\geq M - 2(K-1)\cdot n^W = K-1$, $q_0$ is the desired
  local maximum by Corollary~\ref{local-max-in-const}.
\end{proof}

\begin{corollary}
\label{local-max-big-interval}
Consider a position $p\in[K-1:L+2]$ in a word $\mathbf x$. Then
there is a local maximum in the interval $[p:p+(R-1)+1]$.
\end{corollary}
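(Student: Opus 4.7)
The plan is to apply Lemma~\ref{local-max-nonconst-interval} with $p_0 = p$ and $p_1 = p + R - 1$, and to dispatch the degenerate constant-segment case separately via Corollary~\ref{local-max-in-const}. First I would verify the numerical hypotheses of the lemma: $p_0 = p \geq K - 1$ is given; $p_1 = p + R - 1 \leq (L + 1) + R - 1 = L + R = N - W$ using the identity $N = L + W + R$, placing both endpoints in $[K-1 : N-W+1]$; and $p_1 - p_0 = R - 1 = M + K - 2 \geq M$ since $K \geq 2$.

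If $\mathbf x[p_0 : p_1 + W] = \mathbf x[p : p + R - 1 + W]$ is not constant, Lemma~\ref{local-max-nonconst-interval} immediately produces a local maximum in $[p_0 : p_1 + 1] = [p : p + R]$ and the corollary is proved.

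Otherwise, the segment $\mathbf x[p : p + R - 1 + W]$ is constant, and I would argue that $p$ itself is already a local maximum. The constant prefix $\mathbf x[p : p + W]$ activates the exception in the definition of $\pi_{\mathbf x}$, giving $\pi_{\mathbf x}(p) = \min(q - p, R - 1)$, where $q$ is the rightmost position with $\mathbf x[p : q + W]$ constant; the known extent of the constant run forces $q \geq p + R - 1$, so $\pi_{\mathbf x}(p) = R - 1$. The shorter segment $\mathbf x[p : p + W + (K - 1)]$ is constant since $K - 1 \leq R - 1$, and the bound $p \leq L + 1 = R + K - 1 \leq 2R - 1 = N - W - (K - 1)$ (using $K \leq R$, which follows from $M \geq 1$) places $p$ in the domain of Corollary~\ref{local-max-in-const}. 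Its disjunctive hypothesis is satisfied through $\pi_{\mathbf x}(p) = R - 1$, so $p$ is a local maximum, trivially lying in $[p : p + R]$.

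I do not expect any genuine obstacle: the whole argument is bookkeeping among the constants $L$, $W$, $R$, $M$, $K$ set up in the construction. The only conceptual point is recognising that the case Lemma~\ref{local-max-nonconst-interval} explicitly excludes --- a long constant segment starting at $p$ --- is precisely the situation Corollary~\ref{local-max-in-const} was tailored to handle, with the exception clause in the construction of $\pi_{\mathbf x}$ pushing $\pi_{\mathbf x}(p)$ from its nominal value $0$ up to $R - 1$ and thus supplying the disjunct needed by the corollary.
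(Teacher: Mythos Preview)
Your proposal is correct and follows exactly the paper's approach: split on whether $\mathbf x[p:p+R-1+W]$ is constant, invoke Lemma~\ref{local-max-nonconst-interval} in the non-constant case, and invoke Corollary~\ref{local-max-in-const} via $\pi_{\mathbf x}(p)=R-1$ in the constant case. You have simply spelled out the numerical bookkeeping (the bounds on $p_0,p_1$, the inequality $R-1\geq M$, and the domain check for Corollary~\ref{local-max-in-const}) that the paper leaves implicit.
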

\begin{proof}
  If $\mathbf x[p:p+R-1+W]$ is not constant, there is a local maximum by
  Lemma~\ref{local-max-nonconst-interval} since $R-1 \geq M$.
  If $\mathbf x[p:p+R-1+W]$ is constant, then $\nu_{\mathbf x}(p) = R-1$
  and $p$ is a local maximum by Corollary~\ref{local-max-in-const}.
\end{proof}

\begin{corollary}
  \label{local-max-around-L}
  Let $\mathbf x\in \alph^N$ be a word such that
  $L$ is not a local maximum in $\mathbf x$. Then there are a local
  maxima $p,q$ in $\mathbf x$ such that $p<L<q$.
\end{corollary}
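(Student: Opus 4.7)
The plan is to invoke Corollary~\ref{local-max-big-interval} twice at two carefully chosen starting positions, one just below and one just above $L$, so that the produced local maxima bracket $L$ from either side. Recall that the corollary, applied at a starting position $p \in [K-1 : L+2]$, guarantees a local maximum somewhere in the interval $[p : p+R]$, i.e.\ among the positions $p, p+1, \ldots, p+R-1$.

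First, to exhibit a local maximum strictly to the left of $L$, I would apply the corollary at $p = K-1$, which lies in $[K-1 : L+2]$ since $R \geq 0$. Using the identity $L = R + K - 2$ built into the definitions of the construction, the resulting candidate interval $[K-1 : K-1+R]$ is exactly $\{K-1, K, \ldots, L\}$. The corollary therefore yields a local maximum in this range; since $L$ itself is not a local maximum by the standing assumption, this one must be strictly less than $L$.

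Second, to exhibit a local maximum to the right of $L$, I would apply the same corollary at $p = L+1$, which also lies in $[K-1 : L+2]$. The resulting candidate interval is $[L+1 : L+1+R]$, and its largest index $L + R$ equals $N - W$ by the definition $N = L + W + R$, so every candidate position is a valid index for a local maximum. Any local maximum produced by this second application automatically satisfies $q > L$, completing the two-sided bracketing.

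The only obstacle is arithmetic bookkeeping: checking that the hypotheses of Corollary~\ref{local-max-big-interval} are met at both endpoints and that the candidate intervals land exactly on the correct sides of $L$. This is immediate from the identities $L = R + K - 2$ (which makes the left-hand interval $[K-1 : K-1+R]$ terminate precisely at $L$, letting the non-local-maximality of $L$ push the found maximum strictly below it) and $N = L + W + R$ (which keeps the right-hand interval within the valid range of positions). No further combinatorics on walks or periodicity is needed beyond what the corollary already packages.
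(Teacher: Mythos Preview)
Your proof is correct and essentially identical to the paper's: the paper applies Corollary~\ref{local-max-big-interval} at $p=L-R+1$ (which equals your $K-1$) and at $p=L$, while you apply it at $p=K-1$ and $p=L+1$. The one-position shift in the second application is immaterial---the paper's interval $[L:L+R]$ contains $L$ and then uses that $L$ is not a local maximum to push $q$ strictly to the right, whereas your interval $[L+1:L+1+R]$ already excludes $L$---so the two arguments coincide.
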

\begin{proof}
  We find $p$ in the interval $[L-R+1:L+1]$ and $q$ in the
  interval $[L:L+R]$ by Corollary~\ref{local-max-big-interval}.
\end{proof}

Now, we are going to prove that the constructed mapping $f$ satisfies
given conditions. For that purpose, we investigate how functions
$\pi_{\mathbf x}, \nu_{\mathbf x}$ relates to functions
$\pi_{\mathbf y}, \nu_{\mathbf y}$, where $\mathbf y = \mathbf x[1:]+[i]$
for some $i$.

\begin{lemma}
  \label{value-shift}
  Let $\mathbf x,\mathbf y\in\alph^N$ be words such that
  $\mathbf y[:N-1] = \mathbf x[1:]$, and
  $p\in[2:N-W+1]$. Then $\nu_{\mathbf x}(p) = \nu_{\mathbf y}(p-1)$
  or $\mathbf x[L:N]$ is constant and $p = L+1$.
\end{lemma}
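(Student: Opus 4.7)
\medskip

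\textbf{Proof plan for Lemma~\ref{value-shift}.} The approach is a direct case analysis on the position $p$, exploiting the fact that the assumption $\mathbf y[:N-1] = \mathbf x[1:]$ means $\mathbf y[k] = \mathbf x[k+1]$ for all $k \in [:N-1]$; in other words, $\mathbf x$ and $\mathbf y$ coincide under a one-position shift, except possibly at the final index $N-1$ (the freshly appended character of $\mathbf y$). The whole lemma is then a careful bookkeeping of where this ``final character'' can reach the relevant index windows.

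The first step is to observe that the \emph{base} values of $\nu_{\mathbf x}(p)$ and $\nu_{\mathbf y}(p-1)$ agree. Since $p\in [2:N-W+1]$ we have $p+W-1 \leq N-1$ and $p-1+W-1 \leq N-2$, so both subwords $\mathbf x[p:p+W]$ and $\mathbf y[p-1:p-1+W]$ lie entirely within the safe shift range, hence are literally equal. It remains to compare the constant-exception redefinitions, recalling that the exception for $\nu_{\mathbf x}(p)$ fires only when $p \in [1:L+1]$ and $\mathbf x[p-1:p-1+W+R]$ is constant, and the exception for $\nu_{\mathbf y}(p-1)$ fires only when $p-1 \in [1:L+1]$ and $\mathbf y[p-2:p-2+W+R]$ is constant.

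The second step is to split into three sub-cases by $p$. (i) If $p \geq L+2$, then $p \notin [1:L+1]$ and $p-1 \notin [1:L+1]$, so neither exception can fire and both values equal the common base value. (ii) If $p \in [2:L]$, the relevant windows $\mathbf x[p-1:p-1+W+R]$ and $\mathbf y[p-2:p-2+W+R]$ both lie within the safe shift range (since $p-1+W+R-1 \leq N-2$), so they are literally equal and the character $\mathbf y[p-2+W+R] = \mathbf x[p-1+W+R]$ also agrees. Hence the two exception conditions are equivalent, and when they fire they produce the same $\alpha_{i,j}$. (iii) If $p = L+1$, the $\mathbf x$-exception cannot fire since $L+1 \notin [1:L+1]$, while the $\mathbf y$-exception at position $L$ inspects $\mathbf y[L-1:N-1]$, which by the shift equals $\mathbf x[L:N]$. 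So this exception fires iff $\mathbf x[L:N]$ is constant: if it does not fire, both values again equal the base value; if it does fire, we have exactly the alternative clause in the lemma.

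The only real subtlety is case (iii), where the $\mathbf y$-exception additionally references the appended character $\mathbf y[N-1]$, which lies outside the safe shift range and need not match anything computable from $\mathbf x$. This is precisely what creates the exceptional conclusion ``$\mathbf x[L:N]$ is constant and $p = L+1$''. Aside from this boundary effect, the proof is routine index arithmetic, and I would expect the write-up to be essentially just carefully verifying the three inequalities on $p$ that control where each exception is allowed to fire.
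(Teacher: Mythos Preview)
Your proposal is correct and follows essentially the same approach as the paper: first observe that the base values $\nu(\mathbf x[p:p+W])$ and $\nu(\mathbf y[p-1:p-1+W])$ agree because the windows lie entirely in the shift-safe range, then compare the firing conditions of the exceptional $\alpha_{i,j}$-redefinition on each side. The paper phrases the comparison as a forward/backward equivalence of the two exception conditions (with the backward direction failing only at $p=L+1$), whereas you do a three-way case split on $p$; these are organizationally equivalent.

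One small slip to fix: in the paper's half-open interval convention, $[2:L]$ means $\{2,\ldots,L-1\}$, so your case~(ii) as written omits $p=L$; it should read $p\in[2:L+1]$. Your index-bound argument in case~(ii) does go through for $p=L$ (indeed $p-2+W+R = N-2$ there), so this is purely notational. Otherwise the bookkeeping is accurate, including the identification $\mathbf y[L-1:N-1]=\mathbf x[L:N]$ in case~(iii) that produces the exceptional clause.
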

\begin{proof}
  Clearly, $\mathbf x[p:p+W] = \mathbf y[p-1:(p-1)+W]$, so
  \[
  \nu\bigl(\mathbf x[p:p+W]\bigr) = \nu\bigl(\mathbf y[p-1:(p-1)+W]\bigr).
  \]
  To confirm the lemma,
  it remains to discuss the exceptional behavior of $\nu$ that assigns $\alpha_{i,j}$.
  Fix $i,j\in\alph$. We claim that
  with the exception of $p=L+1$ and $\mathbf x[L:N]$ being constant,
  the following items are satisfied
  \begin{itemize}
  \item $p\leq L$,
  \item $\mathbf x[p-1:p-1+W+R]$ is a constant word $[i,\ldots,i]$,
  \item $\mathbf x[p-1+W+R] = j$.
  \end{itemize}
  if and only if the following items are satisfied
  \begin{itemize}
  \item $p-1\leq L$,
  \item $\mathbf y[p-2:p-2+W+R]$ is a constant word $[i,\ldots,i]$,
  \item $\mathbf y[p-2+W+R] = j$.
  \end{itemize}
  The forward implication is clear. The only case in which the backward one could fail is
  when $p-1 \leq L$ but $p\not\leq L$, that is $p = L+1$. In that case, since $\mathbf y[p-2:p-2+W+R]$ is constant, we get that
  \[
  \mathbf y[p-2:p-2+W+R] = \mathbf y[L-1:L-1+W+R]
  = \mathbf x[L:L+W+R] = \mathbf x[L:N]
  \]
  is constant as well.
\end{proof}

\begin{lemma}
  \label{priority-shift}
  Let $\mathbf x,\mathbf y\in\alph^N$ be words such that
  $\mathbf y[:N-1] = \mathbf x[1:]$, and $p\in[1:N-W+1]$.
  If $p > L+1$ and $\mathbf y[p-1:N]$ is constant, then
  $\pi_{\mathbf y}(p-1) = \pi_{\mathbf x}(p)+1 \in[0:R]$.
  Otherwise $\pi_{\mathbf y}(p-1) = \pi_{\mathbf x}(p)$.
\end{lemma}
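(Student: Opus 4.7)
The plan is to unwind the definitions and perform a case analysis driven by whether $\mathbf x[p:p+W]$ is constant and, in the constant case, by how far the constant run extends. Because $p\leq N-W$, the hypothesis $\mathbf y[:N-1] = \mathbf x[1:]$ gives $\mathbf y[p-1:p-1+W] = \mathbf x[p:p+W]$, so the raw priority $\pi(\mathbf x[p:p+W]) = \pi(\mathbf y[p-1:p-1+W])$ is shared between the two sides. If this window is non-constant then neither $\pi_{\mathbf x}(p)$ nor $\pi_{\mathbf y}(p-1)$ is altered by the modification rule, so they agree; moreover $\mathbf y[p-1:N]$ is then not constant either, so we land in the ``otherwise'' branch and the conclusion holds.

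From now on assume $\mathbf x[p:p+W]$ is constant. Let $l_{\mathbf x}$ be the length of the maximal constant run of $\mathbf x$ starting at position $p$, and similarly $l_{\mathbf y}$ for $\mathbf y$ starting at $p-1$; the modification rule reads $\pi_{\mathbf x}(p) = \min(l_{\mathbf x}-W,\,R-1)$ and $\pi_{\mathbf y}(p-1) = \min(l_{\mathbf y}-W,\,R-1)$. The key observation is that $\mathbf y$ agrees with a shift of $\mathbf x$ on all but the last position: $\mathbf y[p-1+i] = \mathbf x[p+i]$ whenever $i\leq N-p-1$, and only $\mathbf y[N-1]$ is a new letter. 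Hence the constant run in $\mathbf y$ starting at $p-1$ either terminates strictly before position $N-1$, in which case it coincides with the run of $\mathbf x$ and $l_{\mathbf y} = l_{\mathbf x}$, or it extends through the whole tail, which is equivalent to $\mathbf y[p-1:N]$ being constant and forces $l_{\mathbf y} = l_{\mathbf x}+1$.

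It remains to convert these sub-cases into priorities. In the first sub-case $\mathbf y[p-1:N]$ is non-constant, so we are in ``otherwise'' and $\pi_{\mathbf y}(p-1) = \pi_{\mathbf x}(p)$. In the second sub-case split on $p$ versus $L+1$. If $p > L+1$, then using $N = L+W+R$ one has $l_{\mathbf x} \leq N-p < W+R-1$, so $\pi_{\mathbf x}(p) = l_{\mathbf x}-W$ is uncapped and $\pi_{\mathbf y}(p-1) = (l_{\mathbf x}+1)-W = \pi_{\mathbf x}(p)+1$, which lies in $[0:R]$ because $l_{\mathbf x}\geq W$ by constancy of the window. If instead $p\leq L+1$, then $l_{\mathbf x} = N-p \geq W+R-1$, the cap $R-1$ saturates both priorities, and $\pi_{\mathbf y}(p-1) = \pi_{\mathbf x}(p) = R-1$, consistent with the ``otherwise'' clause. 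The only real bookkeeping is tracking which end of the word clips the constant run; once the threshold $p > L+1$ is identified as the regime where the cap $R-1$ cannot be reached, the additive $+1$ in the priorities appears automatically from the single extra letter $\mathbf y[N-1]$.
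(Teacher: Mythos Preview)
Your proof is correct and follows essentially the same case structure as the paper's: first dispose of the non-constant window, then in the constant case split on whether $\mathbf y[p-1:N]$ is constant, and finally distinguish $p>L+1$ from $p\le L+1$. Your introduction of the explicit run-lengths $l_{\mathbf x},l_{\mathbf y}$ and the identity $\pi_{\mathbf x}(p)=\min(l_{\mathbf x}-W,R-1)$ is a clean repackaging of the paper's computation with the auxiliary position $q$, but the underlying argument is the same.
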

\begin{proof}
  If $\mathbf x[p:p+W]$ is not constant, then
  \[
  \pi_{\mathbf x}(p) = \pi(\mathbf x[p:p+W])
  = \pi(\mathbf y[p-1:p-1+W]) = \pi_{\mathbf y}(p-1).
  \]
  If $\mathbf x[p:p+W]$ is constant but $\mathbf y[p-1:N]$ is not,
  there is $q\in[p-1+W:N-1]$. such that
  $\mathbf y[p-1:q]$ is constant but $\mathbf y[q+1]\neq\mathbf y[q]$.
  Therefore $\mathbf x[p:q+1]$ is constant and
  \[
  \pi_{\mathbf x}(p) = \min((q+1)-(p+W), R-1)
  = \min(q-(p-1+W), R-1) = \pi_{\mathbf y}(p-1).
  \]
  If $\mathbf y[p-1:N]$ is constant and $p \leq L+1$, then
  $\pi_{\mathbf x}(p) = R-1 = \pi_{\mathbf y}(p-1)$.
  Finally, if $\mathbf y[p-1:N]$ is constant and $p > L+1$,
  then
  \[
  \pi_{\mathbf y}(p-1) = N-(p-1) = (N-p)+1
  = \pi_{\mathbf x}(p)+1\in[0:R].
  \]
\end{proof}

\begin{lemma}
  \label{local-max-shift}
  Let $\mathbf x,\mathbf y\in\alph^N$ be words such that
  $\mathbf y[:N-1] = \mathbf x[1:]$, and
  $p\in[K:N-W+1]$.
  If $p$ is a local maximum in
  $\mathbf x$, then $p-1$ is a local maximum in $\mathbf y$.
  Conversely, if $p-1$ is a local maximum in $\mathbf y$ and $p$ is
  not a local maximum in $\mathbf x$, then $p \geq L+2$ and there is a
  local maximum in $\mathbf x$ in the interval $[L+1:p]$.
\end{lemma}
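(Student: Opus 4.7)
My plan is to drive both implications from Lemma~\ref{priority-shift}, which locks the relation $\pi_{\mathbf{y}}(p-1) \in \{\pi_{\mathbf{x}}(p), \pi_{\mathbf{x}}(p)+1\}$ with the $+1$ arising only under the side conditions $p>L+1$ and $\mathbf{y}[p-1:N]$ constant.

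For the forward implication, I would split on which clause of the local-maximum definition witnesses $p$. If $\pi_{\mathbf{x}}(p)=R$, then priority-shift's $+1$ case is excluded (it would place $\pi_{\mathbf{y}}(p-1)$ outside $[0:R]$), so $\pi_{\mathbf{y}}(p-1)=R$ and $p-1$ is a local maximum via the first clause. Otherwise I would verify the second clause for $p-1$ by bounding, for each $q'$ with $|(p-1)-q'|<K$ and $q=q'+1$, the chain $\pi_{\mathbf{y}}(q') \le \pi_{\mathbf{x}}(q)+1 \le \pi_{\mathbf{x}}(p)+1$. If the $+1$ case fires at $p$, the inequality $\pi_{\mathbf{y}}(q') \le \pi_{\mathbf{y}}(p-1)$ is immediate. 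Otherwise the dangerous scenario is $\pi_{\mathbf{y}}(q')=\pi_{\mathbf{x}}(p)+1$, forcing $q>L+1$, $\mathbf{y}[q':N]$ constant, and $\pi_{\mathbf{x}}(q)=\pi_{\mathbf{x}}(p)\ge 0$. The hard part of the forward direction is ruling this out: when $q'<p-1$ the constancy of $\mathbf{y}[q':N]$ propagates to $\mathbf{y}[p-1:N]$, contradicting the subcase; when $q>p$ the constant word $\mathbf{x}[q:q+W]$ overlaps $\mathbf{x}[p:p+W]$ on at least $W-(K-1)=2K-2$ positions, and walking through the three ways $\pi_{\mathbf{x}}(p)$ could be non-negative (fully constant, almost-constant, or short-periodic) each yields an arithmetic or shape contradiction with $\pi_{\mathbf{x}}(p)=\pi_{\mathbf{x}}(q)$.

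For the backward implication I would first rule out $\pi_{\mathbf{y}}(p-1)=R$ by a parallel argument: $\pi_{\mathbf{x}}(p)=R$ contradicts the hypothesis, whereas the $+1$ route requires $\pi_{\mathbf{x}}(p)=R-1$ together with $p>L+1$, but $p>L+1$ together with $\mathbf{x}[p:N]$ constant already forces $\pi_{\mathbf{x}}(p)=L+R-p\le R-2$. Thus $p-1$ witnesses the local-maximum definition via the second clause. Since $p$ is not a local maximum in $\mathbf{x}$, there is $q$ with $|p-q|<K$ and $\pi_{\mathbf{x}}(q)>\pi_{\mathbf{x}}(p)$, and the chain
\[
\pi_{\mathbf{x}}(p)+1 \;\ge\; \pi_{\mathbf{y}}(p-1) \;\ge\; \pi_{\mathbf{y}}(q-1) \;\ge\; \pi_{\mathbf{x}}(q) \;>\; \pi_{\mathbf{x}}(p)
\]
forces $\pi_{\mathbf{y}}(p-1)=\pi_{\mathbf{x}}(p)+1$. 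Priority-shift then delivers $p\ge L+2$ and $\mathbf{x}[p:N]$ constant, which is the first conclusion.

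It remains to produce a local maximum in $[L+1:p]$. I would let $q_0$ be the smallest index with $\mathbf{x}[q_0:N]$ constant. If $q_0\le L+1$, then $\mathbf{x}[L+1:N]$ is constant, $\pi_{\mathbf{x}}(L+1)=R-1$, and Corollary~\ref{local-max-in-const} at $L+1$ identifies it as a local maximum lying in $[L+1:p]$. Otherwise, recycling the structural case analysis from the forward direction (applied to the witness $q$) forces $\mathbf{x}[p-1]$ to equal the constant value of $\mathbf{x}[p:N]$, so $q_0\le p-1$ and hence $q_0\in[L+2:p-1]$; Corollary~\ref{local-max-in-const} at $q_0$ (its hypothesis $\mathbf{x}[q_0]\ne\mathbf{x}[q_0-1]$ is automatic from minimality of $q_0$) finishes the proof. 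The main obstacle, reused in both directions, is the structural case analysis ruling out $\pi_{\mathbf{x}}(p)=\pi_{\mathbf{x}}(q)\ge 0$ in the presence of a constant tail.
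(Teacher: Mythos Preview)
Your forward implication is essentially the paper's argument, just phrased as a direct verification rather than a contradiction. (Note that once you are in the subcase $\pi_{\mathbf x}(p)<R$ and $\pi_{\mathbf x}(p)\ge 0$, only the ``fully constant'' case can occur; the almost-constant and short-periodic options both force $\pi_{\mathbf x}(p)=R$, so the three-way split you describe collapses to one.)

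The backward implication has a genuine gap. You write ``since $p$ is not a local maximum in $\mathbf x$, there is $q$ with $|p-q|<K$ and $\pi_{\mathbf x}(q)>\pi_{\mathbf x}(p)$,'' but this inference is only valid when $p$ lies in the range $[K-1:N-W-(K-1)+1]$ required by the second clause of the local-maximum definition. From $p-1$ being a local maximum in $\mathbf y$ via the second clause you get only $p-1\le N-W-(K-1)$, i.e.\ $p\le N-W-(K-1)+1$; the boundary value $p=N-W-(K-1)+1$ is outside the admissible range for $p$ in $\mathbf x$, so $p$ fails to be a local maximum for positional reasons alone and no witness $q$ need exist. Indeed, your own chain argument, when it does apply, pins $p$ down to exactly $L+2$, whereas the boundary case gives $p=N-W-(K-1)+1=L+M+1\gg L+2$; these are genuinely different situations. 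The paper treats this boundary case separately: it shows that $\mathbf y[p-2:(p-1)+W]$ cannot be constant (else $\pi_{\mathbf y}(p-2)>\pi_{\mathbf y}(p-1)$, using that $R-1$ is unattainable this far to the right), transfers non-constancy to $\mathbf x[L+1:p+W]$, and then invokes Lemma~\ref{local-max-nonconst-interval} on the interval $[L+1:p]$ (which has length $M$) to extract the required local maximum. Your outline never reaches Lemma~\ref{local-max-nonconst-interval}, so this case is unhandled.

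A smaller remark: in your final paragraph, the ``Otherwise'' branch ($q_0>L+1$) is actually vacuous. Once the chain forces $\pi_{\mathbf y}(q-1)=\pi_{\mathbf x}(q)$ and you know $\mathbf x[q:q+W]$ is constant, overlap with $\mathbf x[p:N]$ makes $\mathbf x[q:N]$ constant, hence $\mathbf y[q-1:N]$ constant, and the ``no $+1$ at $q$'' condition forces $q\le L+1$; so $q_0\le q\le L+1$ always.
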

\begin{proof}
  By Lemma~\ref{priority-shift},
  $\pi_{\mathbf x}(p) = R$ if and only if $\pi_{\mathbf y}(p-1) = R$, in
  that case both $p$ and $p-1$ are local maxima.
  Assume otherwise, that is $\pi_{\mathbf x}(p) < R$,
  and $\pi_{\mathbf y}(p) < R$.

  We first prove the forward implication by contradiction. Suppose that $p$ is a local maximum in $\mathbf x$ but $p-1$ is not a local maximum in $\mathbf y$. We thus find a position $q$ such that $|p-q|<K$,
  $\pi_{\mathbf x}(p)\geq \pi_{\mathbf x}(q)$ and $\pi_{\mathbf y}(p-1) < \pi_{\mathbf y}(q-1)$. By Lemma~\ref{priority-shift}, the priority can raise by at most one when shifting to the left. Since the inequality changed, it was an equality before, that is $\pi_{\mathbf x}(p) = \pi_{\mathbf x}(q)$, and then the priority changed at $q$ but not at $p$:
  $\pi_{\mathbf y}(p-1) = \pi_{\mathbf x}(p)$ and
  $\pi_{\mathbf y}(q-1) = \pi_{\mathbf x}(q)+1$. 
  Moreover, since the priority changed at $q$, $\mathbf y[q-1:N]$ must be constant.
  Since $\pi_{\mathbf y}(p-1) = \pi_{\mathbf x}(q)\in [0:R]$, also
  $\mathbf y[p-1:p-1+W]$ is constant, and consequently, $\mathbf
  y[p-1:N]$ is constant. Since $\pi_{\mathbf y}(q-1) > \pi_{\mathbf
    y}(p-1)$, we get $q < p$. On the other hand, since the priority
  increased at $q$ but not at $p$, we get $p\geq L+1>q$ by
  Lemma~\ref{priority-shift}. Satisfying both is impossible.
  
  Now we prove the other part. Let us assume that $p-1$ is a local
  maximum in $\mathbf y$ but $p$ is not a local maximum in
  $\mathbf x$.
  There are two possible reasons for $p$ not being a local maximum in
  $\mathbf x$. Either $p > N-W-(K-1)$, or there is a position $q$ such
  that $\nu_{\mathbf x}(q) > \nu_{\mathbf x}(p)$ and $|p-q| < K$.

  If $p > N-W-(K-1)$, then $p = N-W-(K-1)+1$ and $\mathbf y[p-2:(p-1)+W]$ is not constant since $p-1$ is a local
  maximum in $\mathbf y$. Therefore $\mathbf x[L+1:p+W]$ is not constant, so there
  is a local maximum in the interval $\mathbf x[L+1:p+1]$ by
  Lemma~\ref{local-max-nonconst-interval} since $p-(L+1) = R-K+1 = M$.
  However, $p$ is not a local maximum, so the maximum belongs to the
  interval $[L+1:p]$.

  Now suppose that there is $q$ such that
  $\nu_{\mathbf x}(q) > \nu_{\mathbf x}(p)$, and $|p-q| < K$. Since $p-1$
  is a local maximum in $\mathbf y$,
  $\nu_{\mathbf y}(q-1) \leq \nu_{\mathbf y}(p-1)$.
  Similarly as in the forward implication, we obtain the following identities from Lemma~\ref{priority-shift},
  \[
  \nu_{\mathbf x}(q) = \nu_{\mathbf y}(q-1) = \nu_{\mathbf y}(p-1) =
  \nu_{\mathbf x}(p)+1.
  \]
  Moreover $\nu_{\mathbf y}(p-1)\in [0:R]$ and $\mathbf y[p-1:N]$ is constant since $\nu_{\mathbf y}(p-1)\neq\nu_{\mathbf x}(p)$. Since $\nu_{\mathbf x}(q) = \nu_{\mathbf y}(q-1) = \nu_{\mathbf y}(p-1)$, $\mathbf x[q:q+W]$ is constant and $\nu_{\mathbf x}(q) = R-1$. We compute
  \[
  \nu_{\mathbf x}(p) = \nu_{\mathbf y}(p-1)-1 = \nu_{\mathbf x}(q) - 1 = (R-1)-1 = R-2,
  \]
  therefore $p = N-(R-2) = L+2$. On the other hand, $q\leq L+1$ since
  $\nu_{\mathbf x}(q) = \nu_{\mathbf y}(q-1)$. Therefore $[L+1:N]$ is constant and $L+1$ is a local maximum in $[L+1:p]$.
\end{proof}

\begin{proposition}
  \label{seq-map-valid}
  The function $f\colon \alph^N\to\relstr G$, as constructed in
  subsection~\ref{construction} is such that for any
  $\mathbf x\in \alph^N$, one of the following cases happen:
  \begin{enumerate}
  \item There is $i\in \alph$ such that
    $f(\mathbf x) = \alpha_{i,i}$ and $f(\mathbf x[1:]+[j]) = \alpha_{i,j}$
    for all $j\in\alph$,
  \item for every $i\in\alph$, there is an edge in $\relstr G$
    \[
    f(\mathbf x) \redgev f(\mathbf x[1:]+[i]).
    \]
  \end{enumerate}
\end{proposition}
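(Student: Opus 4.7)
The plan is to fix $\mathbf x\in\alph^N$, write $\mathbf y_i=\mathbf x[1:]+[i]$ for each $i\in\alph$, and compare the local-maximum structure of $\mathbf x$ and $\mathbf y_i$ near position $L$. The main tools are the shift lemmas (Lemmas~\ref{value-shift}, \ref{priority-shift}, \ref{local-max-shift}), which say that $\nu$, $\pi$, and local-maximum status commute with $\mathbf x\mapsto\mathbf y_i$ up to a one-position backward shift, with boundary exceptions governed entirely by long constant suffixes of $\mathbf x$. The dichotomy between conclusions~(1) and~(2) will be driven by exactly those exceptions.

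For conclusion~(1), I would take the case that $\mathbf x[L:N]$ is the constant word $[i^*,\ldots,i^*]$. The $\pi$-exception gives $\pi_{\mathbf x}(L)=R-1$, and one verifies that no position within distance $K$ of $L$ can attain priority $R$ (such a position would require a periodic or almost-constant window near $L$, ruled out by the constant suffix), so $L$ is a local maximum of $\mathbf x$. The $\nu$-exception at $L$ in $\mathbf x$ either does not fire or fires with $j=i^*$, yielding $\nu_{\mathbf x}(L)=\alpha_{i^*,i^*}$. For each $j\in\alph$, $\mathbf y_j[L-1:N-1]=\mathbf x[L:N]$ is the constant word $[i^*,\ldots,i^*]$ while $\mathbf y_j[N-1]=j$, so the $\nu$-exception in $\mathbf y_j$ at $L$ produces $\nu_{\mathbf y_j}(L)=\alpha_{i^*,j}$, and the parallel priority argument shows $L$ is a local maximum in $\mathbf y_j$. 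Thus $f(\mathbf y_j)=\alpha_{i^*,j}$ for every $j$, giving conclusion~(1).

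For conclusion~(2), I would take $\mathbf x[L:N]$ non-constant and split on whether $L$ is a local maximum in $\mathbf x$. If $L$ is a local maximum with non-constant window $\mathbf x[L:L+W]$, then $\pi_{\mathbf x}(L)=R$; in the periodic subcase (property~(\ref{itm:w-periodic})), $\mathbf y_i[L:L+W]$ is the cyclic shift by one, so $\nu_{\mathbf y_i}(L)$ is the next vertex along the fixed $k$-cycle walk and the edge follows. In the almost-constant subcase (property~(\ref{itm:w-almost-const})), the $\nu$-exception on the $\mathbf y_i$-side activates and yields the edge via the hypothesized $\alpha_{i,i}\redgev t(\alpha_{i,0},\ldots,\alpha_{i,n-1})$. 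If $L$ is a local maximum with a constant window $\mathbf x[L:L+W]$ (which, outside the alpha case, forces $\mathbf x[L-1:N-1]$ constant with $\mathbf x[N-1]\neq i$), then Lemma~\ref{priority-shift} lowers the priority at $L$ in $\mathbf y_i$ by one so $L$ is no longer a local maximum there, while position $L-1$ is a local maximum of $\mathbf y_i$ with $\nu_{\mathbf y_i}(L-1)=\nu_{\mathbf x}(L)$; therefore $f(\mathbf y_i)$ is the second vertex on a fixed walk starting at $f(\mathbf x)$, giving the edge. Finally, if $L$ is not a local maximum in $\mathbf x$, let $p<L<q$ be the bracketing local maxima (Corollary~\ref{local-max-around-L}, with $q-p\geq K$ by Corollary~\ref{walk-long-enough}); Lemma~\ref{local-max-shift} makes the bracketing local maxima in $\mathbf y_i$ around position $L$ be $p-1$ and either $q-1$ (when $q\geq L+2$) or $L$ itself (when $q=L+1$), and in each case $f(\mathbf y_i)$ is the immediate successor of $f(\mathbf x)$ along the same fixed $\walk(\nu_{\mathbf x}(p),\nu_{\mathbf x}(q),q-p)$.

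The main obstacle is the interplay between the $\nu$-exception and the shift lemmas when $\mathbf x[L-1:N-1]$ is constant $[i,\ldots,i]$ but $\mathbf x[N-1]\neq i$: here $\nu_{\mathbf x}(L)=\alpha_{i,j}$ with $j\neq i$, which neither matches conclusion~(1) directly nor connects to $\alpha_{i,\cdot}$ by any of the hypothesized edges. Showing that in this scenario $L$ fails to be a local maximum of $\mathbf y_i$ -- so the walk-interpolation on the $\mathbf y_i$-side takes over, anchored at the local maximum $L-1$ where $\nu_{\mathbf y_i}(L-1)=\nu_{\mathbf x}(L)$ by the boundary version of Lemma~\ref{value-shift} -- and that the first step of this walk provides the required edge, is the trickiest point of the argument. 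The remaining sub-subcases are careful but routine traversals of the shift lemmas.
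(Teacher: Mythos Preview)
Your overall plan---split on whether $\mathbf x[L:N]$ is constant, then use the shift lemmas to compare $\mathbf x$ and $\mathbf y_i$ near $L$---matches the paper's, and your handling of the constant-suffix case (conclusion~(1)) is essentially the same. The trouble is in the non-constant branch, where your secondary split on the \emph{shape of the window} $\mathbf x[L:L+W]$ (periodic / almost-constant / constant) does not work.

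In the periodic subcase, knowing that $\mathbf x[L:L+W]$ has smallest period $k\in[2:K]$ does \emph{not} let you conclude that $\mathbf y_i[L:L+W]=\mathbf x[L+1:L+1+W]$ is the cyclic shift: the new letter $\mathbf x[L+W]$ need not continue the period, so $\mathbf y_i[L:L+W]$ may fail to be $k$-periodic, $L$ may fail to be a local maximum in $\mathbf y_i$, and the cycle-walk edge you want is unavailable. In the almost-constant subcase, $\nu(\mathbf x[L:L+W])$ is left completely unspecified by the construction (item~(\ref{itm:w-almost-const}) fixes only $\pi$), so $f(\mathbf x)$ is an arbitrary vertex; the $\nu$-exception on the $\mathbf y_i$-side does \emph{not} activate (the segment $\mathbf y_i[L-1:L-1+W]=\mathbf x[L:L+W]$ is not constant); and the edge $\alpha_{i,i}\redgev t(\alpha_{i,0},\ldots,\alpha_{i,n-1})$ you invoke is not a hypothesis of this proposition---it enters only later, in Theorem~\ref{local-loop-basic}, precisely through conclusion~(1). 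In the constant-window subcase, your structural claim ``forces $\mathbf x[L-1:N-1]$ constant with $\mathbf x[N-1]\neq i$'' is false: for instance $\mathbf x[L-1]\neq i$ with $\mathbf x[L:N-1]$ constant and $\mathbf x[N-1]\neq i$ still makes $L$ a local maximum with a constant window, and there are further possibilities with $\pi_{\mathbf x}(L)<R-1$.

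The paper avoids all of this by splitting instead on whether \emph{both $L$ and $L+1$} are local maxima in $\mathbf x$. If they are, Lemma~\ref{close-local-max} forces $\mathbf x[L:L+W+1]$ to be periodic with period strictly less than $K$, and one checks it cannot be constant; then Corollary~\ref{subword-period} gives both windows the same smallest period $k\in[2:K]$, so the cycle-walk edge from item~(\ref{itm:w-periodic}) applies cleanly. The almost-constant window cannot occur in this branch. If not both are local maxima, one takes the bracketing local maxima $p_0\le L<p_1$ with $p_1-p_0\ge K$ (via Lemma~\ref{close-local-max} when $p_0=L$, via Corollary~\ref{walk-long-enough} when $p_0<L$), and Lemma~\ref{local-max-shift} plus Lemma~\ref{value-shift} transfer them to $p_0-1,p_1-1$ in $\mathbf y_i$ with the same $\nu$-values; then $f(\mathbf x)$ and $f(\mathbf y_i)$ are positions $L-p_0$ and $L-p_0+1$ on the \emph{same} fixed walk $\walk(\nu_{\mathbf x}(p_0),\nu_{\mathbf x}(p_1),p_1-p_0)$. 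This uniform treatment absorbs your almost-constant, constant-window, and ``tricky'' sub-subcases in one stroke, without ever needing to know what $\nu$ does on almost-constant windows.
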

\begin{proof}
  If $\mathbf x[L:N]$ is constant, the first case happens. In
  particular, $i = \mathbf x[L]$, $\pi_{\mathbf x}(L) = R-1$,
  $L$ is a local maximum, $\nu_{\mathbf x}(L) = \alpha_{i,i}$, so
  $f(\mathbf x) = \alpha_{i,i}$. Let $\mathbf y$ denote
  $\mathbf x[1:]+[j]$. Thus $\pi_{\mathbf y}(L) = R-1$,
  $L$ is a local maximum in $\mathbf y$ and we have $\nu_{\mathbf x}(L) = \alpha_{i,j}$ by the exceptional case for
  value $\nu_{\mathbf y}$.

  If $\mathbf x[L:N]$ is not constant, we show that the second case
  happens. Let $\mathbf y = \mathbf x[1:]+[i]$.
  First, we prove it if both $L$ and $L+1$ are local maxima
  in $\mathbf x$. In this case $L-1$ and $L$ are local maxima in
  $\mathbf y$ by Lemma~\ref{local-max-shift}. Also
  $\nu_{\mathbf y}(L) = \nu_{\mathbf x}(L+1)$ by Lemma~\ref{value-shift}.
  By Lemma~\ref{close-local-max}, $\pi_{\mathbf x}(L) = \pi_{\mathbf x}(L+1)\geq 0$
  and $\mathbf x[L:L+W+1]$ is periodic with a period smaller than $K$.
  We show that $\mathbf x[L:L+W+1]$ cannot be constant.
  Assume that the subword is constant to obtain a contradiction, then
  $\pi_{\mathbf x}(L) = \min(R-1, \pi_{\mathbf x}(L+1)+1)$. Since
  $\pi_{\mathbf x}(L) = \pi_{\mathbf x}(L+1)$, we get
  $\pi_{\mathbf x}(L+1) = R-1$, so $\mathbf x[L+1:(L+1)+W+(R-1)]$ is
  constant. That contradicts the assumption that $\mathbf x[L:N]$ is not
  constant. Therefore $\mathbf x[L:L+W+1]$ is periodic with a smallest
  period $k$ such that $1<k<K$. Thus $k$ is also the smallest period
  of words 
  $\mathbf x[L:L+W]$ and $\mathbf x[L+1:L+1+W]$
  by Corollary~\ref{subword-period}. By definition of
  $\nu$, the vertices $\nu(\mathbf x[L:L+W])$ and $\nu(\mathbf
  x[L+1:L+W+1])$ are consecutive vertices on a cycle walk of length
  $k$, so there is an edge
  \[
  f(\mathbf x) = \nu(\mathbf x[L:L+W]) \redgev
  \nu(\mathbf x[L+1:L+W+1]) = f(\mathbf y).
  \]

  Now, let us assume that $L$ or $L+1$ is not a local maximum in $\mathbf x$.
  Let $p_0$ be the right-most local maximum such that $p_0\leq L$, and let
  $p_1$ be the left-most local maximum such that $p_1 > L$.
  They both exist by Corollary~\ref{local-max-big-interval}, and
  $p_1 - p_0 \geq K$ by Corollary~\ref{walk-long-enough}. By the choice of
  $p_0, p_1$, there is no local maximum strictly between $p_0, p_1$.
  Therefore by Lemma~\ref{local-max-shift} $p_0-1, p_0-1$ are local maxima
  in $\mathbf y$ and there is no local maximum between them.
  Since $\mathbf x[L:N]$ is not constant,
  $\nu_{\mathbf y}(p_0-1) = \nu_{\mathbf x}(p_0)\defeq u_0$
  and $\nu_{\mathbf y}(p_1-1) = \nu_{\mathbf x}(p_1)\defeq u_1$
  by Lemma~\ref{value-shift}. Finally, we get the desired edge
  \begin{align*}
  f(\mathbf x) = \nu_{\mathbf x}(L) ={}& \walk(u_0, u_1, p_1-p_0)[L-p_0]\\ \redgev{}& \walk(u_0, u_1, p_1-p_0)[L-p_0+1] = \nu_{\mathbf y}(L) = f(\mathbf y).
  \end{align*}
\end{proof}

\section{Double loop}
\label{double-loop}

The core of the paper describing the weakest nontrivial
equations~\cite{WeakestIdempotent} is the proof that the existence of
a Taylor term implies the existence of a double loop term,
that is a term $d$ satisfying the double loop equations:
\begin{align*}
d(xx,xxxx,yyyy,yy) &= d(xx,yyyy,xxxx,yy)\\
d(xy,xxyy,xxyy,xy) &= d(yx,xyxy,xyxy,yx)
\end{align*}
The variables are grouped together for better readability.
The double loop equations can be obtained as follows. Consider a
$4 \times 12$ matrix whose columns are all the four-tuples
$[a_0,a_1,b_0,b_1] \in \{x,y\}^4$ with $a_0 \neq a_1$ or
$b_0 \neq b_1$, and let
$\tuple{r}_0,\tuple{r}_1,\tuple{r}_2,\tuple{r}_3$ denote
its rows. The double loop equations are then $d(\tuple{r}_0) \approx
d(\tuple{r}_1)$ and $d(\tuple{r}_2) \approx d(\tuple{r}_3)$. If the
columns are organized lexicographically with $x < y$, we get the
equations above.

The fact that a Taylor term implies a double loop term is proved
in~\cite{WeakestIdempotent} by an intermediate step in a form of a
double loop lemma. We provide a local version of
that procedure. Not only the local loop lemma makes possible to get the
double loop lemma in a more straightforward manner but also the
implication Taylor term $\Rightarrow$ double loop term gets a
stronger, ``local'' notion: if an idempotent algebra satisfies Taylor
equations locally on $X$, it satisfies the double loop equations
locally on $X$. The notion of locally satisfied equational condition
is defined below.

\begin{definition}
  Let $\alg A$ be an algebra with a subset $X\subset\alg A$.
  Let $\mathcal S$  be an equational condition.
  We say that $\alg A$ satisfies $\mathcal S$ locally on $X$
  if it is possible to assign term operations in $\alg A$ to the
  term symbols in $\mathcal S$ so that every equation is satisfied
  whenever the variables are chosen from the set $X$.
\end{definition}
Note that if $X$ is the universe of $\alg A$, then $\alg A$ satisfying
$\mathcal S$ locally on $X$ just means that $\alg A$ satisfies
$\mathcal S$ as an equational condition.

\begin{theorem}[Local double loop lemma]
  \label{double-loop-lemma}
  Let  $\alg A = (A; t^\alg A)$ and $\alg B = (B; t^\alg B)$ be
  algebras in the signature consisting of a single $n$-ary operation
  symbol $t$. Assume that $\alg A$ is generated by
  $\{x^\alg A,y^\alg A\}$, $t^\alg A$ is idempotent, $\alg B$ is
  generated by $\{x^\alg B,y^\alg B\}$ and $t^\alg B$ satisfy the
  quasi Taylor system of equations locally on
  $\{x^\alg B,y^\alg B\}$.
  Let $Q$ be the subuniverse of $\alg A^2\times \alg B^2$ generated by
  all the 12 quadruples $[a_0,a_1,b_0,b_1]$ with
  $a_0,a_1\in\{x^\alg A,y^\alg A\},\; b_0,b_1\in\{x^\alg B,y^\alg B\}$,
  such that $a_0\neq a_1$ or $b_0\neq b_1$.
  Then there is a double loop in $Q$, that is, a quadruple
  $[a,a,b,b]\in Q$.
\end{theorem}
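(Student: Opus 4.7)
The strategy is to apply Theorem~\ref{main-thm} to a digraph on $A$ built from $Q$, using $t^\alg A$ as the idempotent operation and witnesses $\alpha_{i,j}$ read off from the quasi-Taylor equations for $t^\alg B$.

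Define the digraph $\relstr G$ on $A$ by $u \redgev v$ iff there is $b \in B$ with $[u, v, b, b] \in Q$. A loop in $\relstr G$ is precisely a double loop in $Q$, so the task reduces to producing one. The graph $\relstr G$ is symmetric, hence undirected, because the involution of $\alg A^2 \times \alg B^2$ that swaps the two $A$-coordinates and the two $B$-coordinates fixes the set of 12 generators setwise and therefore preserves $Q$. Compatibility with $t^\alg A$ follows from $Q$ being a subuniverse: given edges $u_j \redgev v_j$ realized by $b_j \in B$, the combined quadruple $[t^\alg A(u), t^\alg A(v), t^\alg B(b), t^\alg B(b)]$ is in $Q$ and witnesses $t^\alg A(u) \redgev t^\alg A(v)$.

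To supply condition~(4) of Theorem~\ref{main-thm}, I would use the quasi-Taylor property of $t^\alg B$: for each $i$, fix $\tuple{p}_i, \tuple{q}_i \in \{x^\alg B, y^\alg B\}^n$ with $\tuple{p}_i[i] = x^\alg B$, $\tuple{q}_i[i] = y^\alg B$, and $t^\alg B(\tuple{p}_i) = t^\alg B(\tuple{q}_i) =: b_i$. Set $\alpha_{i,i} = x^\alg A$, and for $j \neq i$, set $\alpha_{i,j} = y^\alg A$ when $\tuple{p}_i[j] = \tuple{q}_i[j]$ and $\alpha_{i,j} = x^\alg A$ otherwise. By construction, every quadruple $(\alpha_{i,i}, \alpha_{i,j}, \tuple{p}_i[j], \tuple{q}_i[j])$ falls among the 12 generators (the forbidden ``both constant'' case is precisely what the choice avoids). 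Applying the $n$-ary operation of $\alg A^2 \times \alg B^2$ to these $n$ generators and using idempotency of $t^\alg A$ on the first coordinate yields $(\alpha_{i,i}, t^\alg A(\alpha_{i,0}, \ldots, \alpha_{i,n-1}), b_i, b_i) \in Q$, which witnesses the edge $\alpha_{i,i} \redgev t^\alg A(\alpha_{i,0}, \ldots, \alpha_{i,n-1})$ required by~(4).

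Conditions~(1), (2), and~(4) of Theorem~\ref{main-thm} are thereby verified, and the theorem will produce a loop in $\relstr G$ as soon as condition~(3) is secured. Connectedness of the component relevant to the $\alpha$'s follows from $\relstr G$ containing the edge $x^\alg A \edgev y^\alg A$ supplied by the generator $[x^\alg A, y^\alg A, x^\alg B, x^\alg B] \in Q$; the main obstacle is non-bipartiteness. I expect it to emerge from a finer analysis of the Taylor edges: in favorable cases one can pick $\tuple{p}_i, \tuple{q}_i$ differing at every coordinate, which forces $\alpha_{i,\cdot}$ to be constantly $x^\alg A$ and, by idempotency, makes the Taylor edge a loop; in general, odd cycles should be assembled by combining Taylor edges for several $i$ with $t^\alg A$-closure of the initial edge $x^\alg A \edgev y^\alg A$, in the spirit of the analogous step in the proof of the double loop lemma from~\cite{WeakestIdempotent}, but made more direct by the explicit star-power construction underlying Theorem~\ref{main-thm}. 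Once condition~(3) is established, Theorem~\ref{main-thm} yields the desired loop, equivalently a double loop in $Q$.
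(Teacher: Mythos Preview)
Your overall plan coincides with the paper's: define $\relstr G$ on $A$ by $u\redgev v$ iff some $b$ gives $[u,v,b,b]\in Q$, check that $\relstr G$ is undirected and compatible with $t^{\alg A}$, and invoke Theorem~\ref{main-thm}. Your verification of (1), (2) is fine, and your construction of the $\alpha_{i,j}$ for (4) is correct (it differs from the paper's simpler choice $\alpha_{i,i}=x^{\alg A}$, $\alpha_{i,j}=y^{\alg A}$ for $j\neq i$, but it works).

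The genuine gap is condition~(3): you do not establish that $\relstr G$ is connected and non-bipartite, and your closing paragraph explicitly defers this (``I expect it to emerge\ldots''). This is not a detail that falls out of what you have written. All the edges you produce are incident to $x^{\alg A}$, and such edges alone never force an odd cycle. Moreover, your ``connectedness of the relevant component'' is also incomplete: to restrict Theorem~\ref{main-thm} to the component of $x^{\alg A}$ you need that component to be closed under $t^{\alg A}$, and the tool for that (Corollary~\ref{component-closed}) already requires non-bipartiteness.

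What the paper does, and what your argument is missing, is a single stronger claim that simultaneously supplies condition~(4) and the odd cycle: whenever $a_0,\ldots,a_{n-1},a'_0,\ldots,a'_{n-1}\in\{x^{\alg A},y^{\alg A}\}$ agree at \emph{exactly one} position~$i$, there is a $\relstr G$-edge $t^{\alg A}(a_0,\ldots,a_{n-1})\edgev t^{\alg A}(a'_0,\ldots,a'_{n-1})$, witnessed by the $i$-th quasi-Taylor equation on the $B$-side. Your (4)-argument is the special case $a_j=x^{\alg A}$ for all $j$; the general form lets one chain
\[
t^{\alg A}(x,x,\ldots,x)\edgev t^{\alg A}(x,y,\ldots,y)\edgev t^{\alg A}(y,x,\ldots,x)\edgev t^{\alg A}(x,x,y,\ldots,y)\edgev\cdots\edgev t^{\alg A}(x,x,\ldots,x),
\]
a closed walk of odd length $2n-1$ containing $x^{\alg A}$. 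With this in hand, Corollary~\ref{component-closed} makes the component of $x^{\alg A}$ closed under $t^{\alg A}$; since that component contains both generators $x^{\alg A},y^{\alg A}$ it is all of $A$, so $\relstr G$ is connected and non-bipartite, and Theorem~\ref{main-thm} applies.
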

\begin{proof}
  We assume that $x^\alg A\neq y^\alg A$ and $x^\alg B\neq y^\alg B$,
  otherwise the theorem is trivial.
  Let us define a graph $\relstr G = (A,E)$ on $A$ by
  \[
  E = \{[a_0,a_1]\in A^2 \mid \exists b\in B\colon [a_0,a_1,b,b]\in Q.\}
  \]
  Observe that since the generators of $Q$ are symmetric in the first
  two coordinates, so is the $Q$ itself, and consequently the
  graph $\relstr G = (A,E)$ is undirected. Clearly $[x,y]\in E$. Our
  goal is to apply Theorem~\ref{main-thm} to $\relstr G$.

  \begin{claim}
    \label{E-absorbtion}
    Consider elements
    $a_0,\ldots,a_{n-1},a'_0,\ldots,a'_{n-1}\in \{x^\alg B, y^\alg B\}$
    such that there is exactly one $i\in[:n]$ such that $a_i = a'_i$.
    Then there is a $\relstr G$-edge
    \[
    t^{\alg A}(a_0,\ldots,a_{n-1})\redgev
    t^{\alg A}(a'_0,\ldots,a'_{n-1})
    \]
  \end{claim}
  To verify the claim, we use the Taylor equation number $i$, that is
  \[
  t^{\alg B}(b_0, \ldots, b_{n-1}) = t^{\alg B}(b'_0, \ldots, b'_{n-1})
  \]
  for some $b_o,\ldots,b_{n-1},b'_0,\ldots,b'_{n-1}\in\{x^{\alg B},y^{\alg B}\}$
  where $b_i = x^{\alg B}$ and $b'_i = y^{\alg B}$.
  Since $b_i\neq b'_i$ and $a_j\neq a'_j$ for every $j\neq i$, we have
  $(a_j,a'_j,b_j,b'_j)\in Q$ for every $j\in[:n]$. Therefore
  \[
  \bigl[ t(a_0,\ldots,a_{n-1}), t(a'_0,\ldots,a'_{n-1}),
  t(b_0,\ldots,b_{n-1}), t(b'_0,\ldots,b'_{n-1}) \bigr]\in Q,
  \]
  which testify the claim.

  Due to Claim~\ref{E-absorbtion}, there is a cycle walk of length
  $2n-1$ in $\relstr G$ containing $x^{\alg A}$:
  \begin{align*}
    t^\alg A(x^\alg A,x^\alg A,\ldots,x^\alg A) &\redgev t^\alg A(x^\alg A,y^\alg A,y^\alg A,\ldots,y^\alg A)\redgev \cr
    t^\alg A(y^\alg A,x^\alg A,\ldots,x^\alg A) &\redgev t^\alg A(x^\alg A,x^\alg A,y^\alg A,\ldots,y^\alg A)\redgev \cr
    &\centerby{${}\redgev$}{$\vdots$}\cr
    t^\alg A(y^\alg A,\ldots,y^\alg A,x^\alg A) &\redgev t^\alg A(x^\alg A,x^\alg A,x^\alg A\ldots,x^\alg A).
  \end{align*}
  By Corollary~\ref{component-closed}, the component of $\relstr G$ containing $x^{\relstr A}$ is closed under $t$. However, since this component contains also $y^{\relstr A}$ and $\{x^{\relstr A},y^{\relstr A}\}$ generates $\alg A$, the component covers whole $\relstr G$, hence $\relstr G$ is connected.
  Finally, we set $\alpha_{i,i} = x^{\alg A}$ and
  $\alpha_{i,j} = y^{\alg A}$ if $i\neq j$. The edges
  \[
  x^{\alg A} \redgev
  t(y^{\alg A},\ldots,y^{\alg A}, x^{\alg A},y^{\alg A},\ldots,y^{\alg A})
  \]
  are direct consequences of Claim~\ref{E-absorbtion} and the
  idempotency of $t$.
  All the assumptions of Theorem~\ref{main-thm} are verified, so there
  is a loop $[a,a]\in E$. By definition of $E$, there is a double loop
  $[a,a,b,b]\in Q$.
\end{proof}

\begin{theorem}
  \label{double-loop-term}
  Let $\alg A$ be an idempotent algebra that satisfies Taylor
  equations locally on $X$. Then $\alg A$ satisfies double loop
  equations locally on $X$.
\end{theorem}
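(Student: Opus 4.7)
The plan is to reduce Theorem~\ref{double-loop-term} to a single application of Theorem~\ref{double-loop-lemma} by passing to a suitable power of $\alg A$ in which a witness to the double loop equations becomes automatically uniform across $X$. Let $t$ be an $n$-ary term operation of $\alg A$ witnessing that $\alg A$ satisfies the Taylor equations locally on $X$. I will work in the power $\alg A^{X\times X}$ restricted to the signature $\{t\}$, and let $\bar x,\bar y \in A^{X\times X}$ denote the two coordinate projections $\bar x(u,v)=u$ and $\bar y(u,v)=v$. Let $\alg A'$ be the subalgebra of this power generated by $\{\bar x,\bar y\}$; the case $|X|\leq 1$ (where $\bar x=\bar y$) is trivial and I would set it aside.

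The next step is to verify the hypotheses of Theorem~\ref{double-loop-lemma} with the choice $\alg A=\alg B=\alg A'$. Idempotency of $t^{\alg A'}$ is inherited directly from $\alg A$. For the local quasi Taylor condition on $\{\bar x,\bar y\}$, any Taylor equation evaluated coordinatewise at a point $(u,v)\in X\times X$ becomes an instance of the local Taylor equation on $\alg A$ with variables $u,v\in X$, which holds by hypothesis. Hence $t^{\alg A'}$ satisfies the full Taylor system, and in particular the quasi Taylor system, locally on $\{\bar x,\bar y\}$.

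Theorem~\ref{double-loop-lemma} then supplies a double loop $[a,a,b,b]\in Q$, where $Q$ is the subuniverse of $(\alg A')^4$ generated by the twelve admissible quadruples over $\{\bar x,\bar y\}$. Membership in $Q$ is witnessed by a $12$-ary term $d$ in the signature $\{t\}$ such that $d$ applied to the four rows $\tuple r_0,\tuple r_1,\tuple r_2,\tuple r_3$ of the generating $4\times 12$ matrix yields $a,a,b,b$ respectively. Thus $d(\tuple r_0)=d(\tuple r_1)$ and $d(\tuple r_2)=d(\tuple r_3)$ hold in $\alg A'$, and since $\alg A'\subseteq \alg A^{X\times X}$ they hold coordinatewise: for every $(u,v)\in X\times X$, substituting $u$ for $\bar x$ and $v$ for $\bar y$ and evaluating $d$ as a term operation of $\alg A$ gives the two double loop equations for $\alg A$ with variables $u,v$. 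Since $d$ is a composition of copies of $t$, it is a term operation of $\alg A$, and assigning it to the double loop symbol shows $\alg A$ satisfies the double loop equations locally on $X$.

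The conceptual obstacle is the demand for a single uniform $d$ that works simultaneously for every $x,y\in X$; this is exactly what forces the passage to the power $\alg A^{X\times X}$, where the single pair $(\bar x,\bar y)$ encodes all pairs from $X$ at once. Once this setup is in place, the remainder is a routine translation between ``membership of a tuple in the generated subuniverse'' and ``a term witnessing the equations'', so I expect no further technical difficulty beyond the initial encoding.
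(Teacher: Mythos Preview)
Your proof is correct and essentially identical to the paper's: the paper constructs a ``local free algebra'' $\alg F$ whose elements are the binary term functions $X^2\to\alg A$, which is precisely your subalgebra $\alg A'$ of $\alg A^{X\times X}$ generated by the two projections, and then applies Theorem~\ref{double-loop-lemma} with both algebras equal to this object to extract the term $d$. The only cosmetic difference is that you explicitly restrict to the signature $\{t\}$ from the outset, whereas the paper keeps the full signature of $\alg A$ for $\alg F$; this is immaterial since the double loop produced by Theorem~\ref{double-loop-lemma} already lies in the $\{t\}$-generated relation.
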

\begin{proof}
  We construct a ``local free algebra'' $\alg F$ with the signature of
  $\alg A$ generated by two generators. The universe $F$ of $\alg F$ consists of
  all the binary operations $X^2\to\alg A$ that can be expressed by a
  term in $\alg A$. The operations on $\alg F$ are naturally inherited from
  the basic operations on $\alg A$ by the left composition. Thus $\alg F$
  is an idempotent algebra generated by the binary projections. Let us
  denote the binary projections $x,y$ respectively.
  Since $\alg A$ satisfies some Taylor equations
  locally on $X$ and the images of the functions $x, y$ equals to
  $X$, $\alg F$ satisfies the same Taylor equations locally
  on $\{x, y\}$.

  Let $Q\subset F^4$ be a 4-ary relation on $F$ generated by all the
  quadruples $[a_0,a_1,b_0,b_1]$, where
  $a_0,a_1,b_0,b_1\in\{x, y\}$ and $a_0\neq a_1$
  or $b_0\neq b_1$.
  By Theorem~\ref{double-loop-lemma}, there is a double loop
  $[a,a,b,b]\in Q$. Therefore, there is a term $d$ in the signature of
  $\alg A$ that takes the generators of $Q$ and returns $[a,a,b,b]$.
  In particular
  \begin{align*}
  d(xx,xxxx,yyyy,yy) &= a,\\
  d(xx,yyyy,xxxx,yy) &= a,\\
  d(xy,xxyy,xxyy,xy) &= b,\\
  d(yx,xyxy,xyxy,yx) &= b.
  \end{align*}
  Thus $d$ satisfies the double loop equations if we plug in $x,y$ in
  that order. However, whenever we choose a pair $[z_0,z_1]\in X^2$,
  then $[x(z_0,z_1), y(z_0,z_1)] = [z_0, z_1]$, hence $d$ satisfies
  the double loop equations on $\alg A$ if we plug in $z_0, z_1$ in
  that order. Since $z_0, z_1$ can be any pair of elements of $X$,
  $\alg A$ satisfies the double loop equations locally on $X$.
\end{proof}

\section{Strong local loop lemma}
\label{taylor-sec}

In this section, we find a certain upgrade of the local loop lemma by
finding even weaker assumption (4) in Theorem~\ref{main-thm}.
We then use the upgraded version for
reproving a finite loop lemma for strongly connected
digraphs, in particular Theorem~7.2
in~\cite{BartoKozikLoop}

For a given $n$-ary term $t\colon A^n\to A$ and a coordinate
$i\in[:n]$, we define a digraph $\relstr P(t,i)$ on $A$ by
$x_i\redgev t(x_0,x_1,\ldots,x_{n-1})$ for all possible values
$x_0,\ldots,x_{n-1}\in A$. Using digraphs $\relstr P(t,i)$, the
assumption (4) in Theorem~\ref{main-thm} can be expressed as:
``For every $i\in[:n]$, the digraph $\relstr P(t,i)$ has a common edge
with $\relstr G$.''

Let $\overline{\relstr P(t,i)}$ denote the transitive closure of $\relstr P(t,i)$, that is en edge $u\to v$ in $\overline{\relstr P(t,i)}$ indicates a walk from $u$ to $v$ in $\relstr P(t,i)$.
Using this notation, Theorem~\ref{main-thm} has the following generalization.

\begin{theorem}
  \label{local-loop-strong}
  Consider a set $A$, operation $t\colon A^n\to A$ a digraph
  $\relstr G$ on $A$, and elements $a_i,b_i\in\relstr G$ for
  $i\in[:n]$ such that
  \begin{enumerate}
  \item $t$ is idempotent,
  \item $\relstr G$ is compatible with $t$,
  \item $\relstr G$ is either a strongly connected digraph containing
    cycle walks of all lengths greater than one, or $\relstr G$ is an undirected
    connected non-bipartite graph,
  \item for every $i \in [:n]$, there is a common edge $a_i\to b_i$ in $\relstr G$ and $\overline{\relstr P(t,i)}$.
  \end{enumerate}
  Then $\relstr G$ contains a loop.
\end{theorem}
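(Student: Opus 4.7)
The plan is to reduce Theorem~\ref{local-loop-strong} to Theorem~\ref{main-thm} by passing to a sufficiently high star power of $t$.

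For each $i\in[:n]$ the hypothesis furnishes a walk in $\relstr P(t,i)$ from $a_i$ to $b_i$ of some length $\ell_i\geq 1$. A preliminary observation I would record first is that every vertex $v$ has a loop in every $\relstr P(t,j)$: plugging $v$ into every coordinate gives $v$ by idempotency. Consequently any walk in any $\relstr P(t,j)$ can be extended to arbitrary greater length by inserting stationary steps wherever convenient.

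I would then set $L:=n\cdot\max_i\ell_i$ and apply Theorem~\ref{main-thm} to the star power $t^{*L}$, which is $n^L$-ary, idempotent, and compatible with $\relstr G$, and whose argument positions are naturally indexed by words $\mathbf w\in[:n]^L$. Unrolling the recursive definition of $t^{*L}$ by induction on $L$ yields the composition identity
\[
\relstr P(t^{*L},\mathbf w) \;=\; \relstr P(t,\mathbf w[L-1])\circ\relstr P(t,\mathbf w[L-2])\circ\cdots\circ\relstr P(t,\mathbf w[0]),
\]
because varying the input at position $\mathbf w$ propagates through the innermost $t$ via its $\mathbf w[L-1]$-th coordinate, then through the next $t$ via its $\mathbf w[L-2]$-th coordinate, and so on up the composition tree. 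For every $\mathbf w\in[:n]^L$, the pigeonhole principle supplies a letter $i=i(\mathbf w)\in[:n]$ occurring at least $L/n=\max_j\ell_j\geq\ell_i$ times in $\mathbf w$. I then realise an edge $a_i\redgev b_i$ in $\relstr P(t^{*L},\mathbf w)$ by placing the $\ell_i$ steps of the walk from $a_i$ to $b_i$ in $\relstr P(t,i)$, in order, at positions of the composition corresponding to occurrences of $i$ in $\mathbf w$, and filling all remaining positions with loops in the appropriate $\relstr P(t,\cdot)$. This edge translates directly into an explicit assignment $\alpha_{\mathbf w,\cdot}\colon[:n]^L\to\relstr G$ with $\alpha_{\mathbf w,\mathbf w}=a_{i(\mathbf w)}$ and $t^{*L}(\alpha_{\mathbf w,\cdot})=b_{i(\mathbf w)}$.

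Since $a_{i(\mathbf w)}\redgev b_{i(\mathbf w)}$ is by hypothesis a common edge of $\relstr G$ and $\overline{\relstr P(t,i(\mathbf w))}$, the $\relstr G$-edge required by item~(4) of Theorem~\ref{main-thm} holds for every position $\mathbf w$. Items~(1)--(3) are inherited unchanged from $t$ and $\relstr G$, since star powers preserve idempotency and compatibility, so Theorem~\ref{main-thm} applied to $t^{*L}$ delivers a loop in $\relstr G$. The main difficulty is to pin down the correct composition identity for $\relstr P(t^{*L},\mathbf w)$ and to realise that pigeonhole combined with the idempotency-induced loops in each $\relstr P(t,j)$ is exactly what is needed to certify item~(4) uniformly across all $n^L$ positions; once this perspective is fixed, the reduction is essentially mechanical.
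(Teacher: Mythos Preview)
Your proposal is correct and follows essentially the same route as the paper: pad the $\relstr P(t,i)$-walks to a common length via the idempotency-induced loops, pass to a high enough star power so that pigeonhole guarantees some letter $i$ recurs sufficiently often in every index word, and use the corresponding walk data to manufacture the $\alpha$-assignment required by Theorem~\ref{main-thm}. The only cosmetic differences are the exponent (you take $L=n\max_i\ell_i$ where the paper takes $(k-1)n+1$ after first normalising all walk lengths to a common $k$) and your framing via the composition identity for $\relstr P(t^{*L},\mathbf w)$, which the paper leaves implicit by directly writing down substitutions $f_i\colon[:n]^k\to A$ with $f_i([i,\ldots,i])=a_i$, $t^{*k}(f_i)=b_i$, and then setting $\alpha_{\mathbf x}(\mathbf y)=f_i(\mathbf y[p_0],\ldots,\mathbf y[p_{k-1}])$ for chosen positions $p_0<\cdots<p_{k-1}$ of $i$ in $\mathbf x$; one small point worth making explicit is that the $\supseteq$ direction of your composition identity (the direction you actually use) relies on idempotency to fill the off-path subtrees with constants.
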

\begin{proof}
  As in the proof of~\ref{local-loop-basic}, we denote $[:n]$ by
  $\alph$.
  By idempotency of $t$, the edges of $\relstr P(t,i)$ form a
  reflexive relation. Therefore, there is a fixed $k$ such that there
  is a $\relstr P(t,i)$-walk of length $k$ from $a_i$ to $b_i$ for
  every $i\in\alph$. For every $i\in\alph$, fix
  a substitution $f_i\colon \alph^k\to \relstr G$ 
  such that $f_i([i,i,\ldots,i]) = a_i$
  and $t^{*k}(f_i) = b_i$.

  We verify the assumptions of Theorem~\ref{main-thm} using the operation
  $t^{*(k-1)n+1}$. The operation $t^{*(k-1)n+1}$ is idempotent,
  compatible with $\relstr G$, and
  $\relstr G$ already satisfy the relational requirements.
  It remains to find the values $\alpha_{\mathbf x, \mathbf y}$,
  for every $\mathbf x,\mathbf y \in \alph^{(k-1)n+1}$ to make the
  condition (4) satisfied. We
  perceive the matrix $\alpha$ as a sequence of functions in the
  second variable, that is
  $\alpha_{\mathbf x, \mathbf y} = \alpha_\mathbf x(\mathbf y)$. We need
  to find such functions $\alpha_{\mathbf x}$ that there are a
  $\relstr G$-edges
  $$\alpha_{\mathbf x}(\mathbf x) \redgev t^{*(k-1)n+1}(\alpha_{\mathbf x}).$$

  Take $\mathbf x\in\alph^{(k-1)n+1}$. By pigeonhole principle, there
  is $i\in\alph$ occuring at least $k$-times in $\mathbf x$. Let
  $p_0,\ldots,p_{k-1}\in[:(k-1)n+1]$
  be an increasing sequence of positions in $\mathbf x$ such that
  $\mathbf x[p_j] = i$ for every $j\in[:k]$.
  We define $\alpha_{\mathbf x}$ by
  \[
  \alpha_{\mathbf x}(\mathbf y) =
  f_i([\mathbf y[p_0], \mathbf y[p_1], \ldots, \mathbf y[p_{k-1}]]).
  \]
  Thus
  \begin{align*}
    \alpha_{\mathbf x}(\mathbf x) = f_i([i,i,\ldots,i]) &= a_i,\\
    t^{*(k-1)n+1}(\alpha_\mathbf x) = t^{*k}(f_i) &= b_i,
  \end{align*}
  Therefore the assumption (4) of Theorem~\ref{main-thm} is satisfied
  by $a_i\redgev b_i$, and $\relstr G$ has a loop.
\end{proof}

From Theorem~\ref{local-loop-strong} we obtain the following finite version.

\begin{theorem}
  \label{finite-strong}
  Let $A$ be a finite set and $t\colon A^n\to A$ be an idempotent operation.
  Assume that for every $i\in[:n]$ and every pair
  $u,v\in A$, there is $w\in A$ such that there are
  edges $u\redgev w$ and $u\redgev w$ in $\overline{\relstr P(t,i)}$.
  Then every digraph $\relstr G$ that is strongly connected, compatible
  with $t$ and has algebraic length 1, has a loop.
\end{theorem}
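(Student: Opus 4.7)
The plan is to reduce to Theorem~\ref{local-loop-strong}. Hypotheses (1) and (2) of that theorem are immediate from the assumptions, so the bulk of the work is verifying hypothesis (4)---exhibiting, for each coordinate $i \in [:n]$, a common edge of $\relstr G$ and $\overline{\relstr P(t,i)}$---and then securing hypothesis (3).

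The central observation is that, for each $i$, there exists a ``universal sink'' $w_i \in A$: a vertex with $u \redgev w_i$ in $\overline{\relstr P(t,i)}$ for every $u \in A$. Idempotency of $t$ makes $\overline{\relstr P(t,i)}$ reflexive, since for every $x$ the edge $x \redgev t(x,\ldots,x) = x$ lies in $\relstr P(t,i)$; combined with the common-successor hypothesis, this realizes $\overline{\relstr P(t,i)}$ as a preorder in which every pair of elements has an upper bound. Iterating the pairwise property through the finite set $A$ yields $w_i$. Then strong connectivity of $\relstr G$ supplies some $\relstr G$-in-neighbor $a_i$ of $w_i$; setting $b_i := w_i$, the edge $a_i \redgev b_i$ lies in both $\relstr G$ and $\overline{\relstr P(t,i)}$, as required.

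The remaining task is hypothesis (3), which demands that $\relstr G$ contain cycle walks of all lengths greater than one. Proposition~\ref{large-enough-cycles} guarantees cycle walks of all sufficiently large lengths in a finite strongly connected digraph of algebraic length one, but the short cycle walks are not a priori present, and this is the main technical obstacle. The naive replacement of $\relstr G$ by a relational power $\relstr G^{\circ k}$ would create the missing short cycle walks, yet a loop in $\relstr G^{\circ k}$ is only a $k$-cycle walk in $\relstr G$, not a true loop. I would instead absorb the gap into a carefully chosen star power of $t$, echoing the trick inside the proof of Theorem~\ref{local-loop-strong} itself: replace $t$ by $t^{*m}$ for a suitably large $m$ and redefine the data $a_i, b_i$ so that the common-edge requirement is preserved while the cycle walk hypothesis becomes attainable on $\relstr G$ as it stands.
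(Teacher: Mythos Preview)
Your universal-sink argument is fine and matches the paper's. The problem is what comes after: you dismiss the relational-power route too quickly and then propose an alternative that cannot work.

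Replacing $t$ by a star power $t^{*m}$ does nothing for hypothesis~(3) of Theorem~\ref{local-loop-strong}: that hypothesis concerns only the digraph $\relstr G$, and $\relstr G$ is unchanged when you change the operation. So ``absorbing the gap into a star power of $t$'' cannot make the missing short cycle walks appear.

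The paper's argument is precisely the relational-power trick you reject, with one extra twist you overlooked. Assume for contradiction that $\relstr G$ has no loop. By Proposition~\ref{large-enough-cycles} there are cycle walks of all sufficiently large lengths, so there is a \emph{largest} $k$ with no $k$-cycle walk in $\relstr G$. Now pass to $\relstr G^{\circ k}$. Your objection was that a loop in $\relstr G^{\circ k}$ is merely a $k$-cycle walk in $\relstr G$---but by the choice of $k$ there is no such thing, so $\relstr G^{\circ k}$ has no loop either. On the other hand, since $\relstr G$ has cycle walks of every length $>k$, the power $\relstr G^{\circ k}$ has cycle walks of every length $>1$, i.e.\ hypothesis~(3) holds for $\relstr G^{\circ k}$. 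Strong connectedness and compatibility with $t$ pass to relational powers, and the universal sink $b_i$ still receives a $\relstr G^{\circ k}$-edge from some $a_i$ (by strong connectedness of $\relstr G^{\circ k}$), which is automatically an edge of $\overline{\relstr P(t,i)}$ since $b_i$ is universal. Theorem~\ref{local-loop-strong} then produces a loop in $\relstr G^{\circ k}$, a contradiction.
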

\begin{proof}
  Fix $i\in[:n]$. We start by proving the following claim by
  induction on $|X|$
  \begin{claim}
    For every $X\subset A$, there is an element
    $b$ such that for every $x\in X$, there is an edge $x\redgev b$ in
    $\overline{\relstr P(t,i)}$.
  \end{claim}
  If $X$ is empty, it suffices to take any $b\in A$. Otherwise let
  $X = X'\cup\{x\}$, where the claim is already proven for $X'$,
  so there is $b'$ such that
  there is an edge $x'\to b$ for every $x'\in X'$. Using
  the assumption of the theorem and putting $u=b'$, $v=x$, we get a
  vertex $w = b$ such that there are edges $b'\redgev b$, $x\redgev b$.
  By transitivity of $\overline{\relstr P(t,i)}$, there are edges
  $x\redgev b$ for every $x\in X$. This finishes the proof of the
  claim.

  For every $i\in[:n]$, we fix $b_i\in A$ such that there is an edge $x\to b_i$ in $\overline{\relstr P(t,i)}$ for every $x\in A$.
  Consider a strongly connected digraph $\relstr G$ with
  algebraic length 1 that is compatible with $t$. To obtain a
  contradiction, suppose that there is no cycle walk of length 1 (a loop) in
  $\relstr G$. Since $\relstr G$ is a strongly connected digraph with
  algebraic length 1, it contains cycle walks of all enough large
  lengths. Let $k$ denote the largest length such that there is no
  cycle walk of length $k$ in $\relstr G$. The relational power
  $\relstr G^{\relpow k}$ is compatible
  with $t$, strongly connected, and by the choice of $k$,
  $\relstr G^{\relpow k}$ contains cycle walks of all lengths
  greater than 1 but no loop.

  Since $\relstr G^{\relpow k}$ is strongly connected, we can find nodes $a_i$ such
  that there are edges $a_i\to b_i$ in $\relstr G^{\relpow k}$. Any such edge is also an edge in $\overline{\relstr P(t,i)}$ by the choice of $b_i$.
  Therefore, the assumptions of
  Theorem~\ref{local-loop-strong} are satisfied, and we get a
  contradiction with the assumption that $\relstr G^{\relpow k}$ has no loop.
\end{proof}

The standard finite loop lemma for strongly connected digraphs,
originally proved in~\cite{BartoKozikLoop}, is a direct consequence.

\begin{corollary}
  Let $\relstr G$ be a strongly connected digraph with algebraic
  length 1 compatible with a Taylor operation. Then $\relstr G$ has
  a loop.
\end{corollary}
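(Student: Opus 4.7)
The plan is to derive this corollary directly from Theorem~\ref{finite-strong} by showing that a Taylor operation $t$ automatically satisfies the hypothesis on $\overline{\relstr P(t,i)}$. We take $A$ to be the (finite) vertex set of $\relstr G$ and $t$ a Taylor operation of arity $n$ with which $\relstr G$ is compatible. The final equation of the Taylor system is the idempotency equation $t(x,\ldots,x)\approx x$, so $t$ is idempotent; the relational hypotheses of Theorem~\ref{finite-strong} (strong connectedness and algebraic length $1$) are inherited verbatim from the corollary. Only the condition on common successors in $\overline{\relstr P(t,i)}$ remains to be checked.

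To verify that condition, I fix $i\in[:n]$ and examine the $i$-th Taylor equation, which has the shape
\[
t(u_0,\ldots,u_{i-1},x,u_{i+1},\ldots,u_{n-1})\approx t(v_0,\ldots,v_{i-1},y,v_{i+1},\ldots,v_{n-1}),
\]
where every $u_j$ and $v_j$ is either $x$ or $y$. Given any two vertices $a,b\in A$, substituting $x:=a$ and $y:=b$ into this identity produces a single common value
\[
w \;=\; t(u_0(a,b),\ldots,a,\ldots,u_{n-1}(a,b)) \;=\; t(v_0(a,b),\ldots,b,\ldots,v_{n-1}(a,b)),
\]
with $a$ in coordinate $i$ of the left-hand term and $b$ in coordinate $i$ of the right-hand term. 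By the very definition of $\relstr P(t,i)$ this means $a\redgev w$ and $b\redgev w$ are edges of $\relstr P(t,i)$ itself, hence a fortiori of its transitive closure $\overline{\relstr P(t,i)}$.

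Since $a,b\in A$ were arbitrary, the hypothesis of Theorem~\ref{finite-strong} holds for every coordinate $i\in[:n]$, and the theorem delivers a loop in $\relstr G$. There is really no obstacle in this argument; the only observation required is that the $i$-th Taylor equation is, read through the lens of $\relstr P(t,i)$, exactly the statement that any two vertices share a common out-neighbour, which is a slightly stronger condition than what Theorem~\ref{finite-strong} asks for (since we do not even need to pass to the transitive closure).
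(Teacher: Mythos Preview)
Your proof is correct and follows essentially the same approach as the paper's own proof: both verify the hypothesis of Theorem~\ref{finite-strong} by observing that the $i$-th Taylor equation, instantiated at any pair of elements, yields a common out-neighbour in $\relstr P(t,i)$. The only differences are cosmetic (variable names and a bit more exposition in your version).
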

\begin{proof}
  Let us denote the Taylor operation as $t$, and the vertex set of
  $\relstr G$ as $A$.
  Since the digraph $\relstr G$ is strongly connected and has algebraic
  length 1, it remains to verify the assumptions of
  Theorem~\ref{finite-strong}. We take $i\in[:n]$
  and $u,v\in A$, and find $w$ such that
  $u\redgev w$ and $v\redgev w$ in $\relstr P(t,i)$.
  This is straightforward, it suffices to set
  \[
  w = t(x_0,\ldots,x_i=u,\ldots,x_{n-1}) = t(y_0,\ldots,y_i=v,\ldots,y_{n-1}),
  \]
  where $x_j,y_j$ are set to $u$ or $v$ according to the Taylor
  equation number $i$.
\end{proof}

\section{Conclusion}
\label{conclusion}

Though perceiving positions of variables in a star power as words and
applying a simple word combinatorics on them gives surprisingly strong
results, further research is needed.
In particular, we would like to see a proof that is able to
separate the technical effort from the overall powerful
machinery. This could lead not only to a nicer proof the local loop lemma in
this article but also pave a way to various interesting generalizations.

A modest generalization would be replacing the item (3) in
Theorem~\ref{main-thm} by simply $\relstr G$ being a strongly
connected digraph with algebraic length 1. The fact that we were able
to get around it whenever we needed suggests that it is really rather
a technical issue in the proof rather than a real obstacle.

A bit bolder attempt would be replacing the assumption of a strongly
connected graph by something weaker. While the finite loop lemma,
Theorem~\ref{dir-loop-lemma}, suggests that the assumption of a strongly
connected digraph is not entirely necessary, the strong connectedness
forms a solid barrier for loop conditions, see Chapter 6
in~\cite{LoopConditions} for counterexamples.

However, to get a widely applicable and powerful tool, it is necessary
to get beyond a single digraph. What are the necessary assumptions to
get a loop shared by two digraphs? What about loops in hypergraphs
(see~\cite{PseudoLoopConditions})?
A natural question comes from Section~\ref{double-loop}. While
it is possible to prove that a local Taylor implies local double-loop
term, there is a much simpler form of the (global) weakest non-trivial
idempotent equational condition:
\begin{question}
Let $\alg A$ be an idempotent algebra that satisfies Taylor identities
locally on a set $X$. Does it necessarily has a term
that satisfies
$$
t(x,y,y,y,x,x) = t(y,x,y,x,y,x) = t(y,y,x,x,x,y)
$$
locally on $X$?
\end{question}

And last but not least, is it possible to apply the ideas in this
article to oligomorphic structures, and consequently, infinite
constraint satisfaction problem
(see~\cite{PseudoSiggers,PseudoLoopConditions})?
Oligomorphic algebras are kind of the opposite of idempotent
algebras -- idempotent algebras have only the trivial unary term
operation while oligomorphic algebras has a large group of them. On
the other hand, notice that the proof of Theorem~\ref{main-thm}
uses the idempotency at just two position in a predictable manner,
therefore there might be a way of using a variant of the local loop
lemma in algebras that are not idempotent.

\bibliographystyle{plain}
\bibliography{bib-file.bib}

\begin{thebibliography}{10}

\bibitem{BartoKozikCyclic}
Libor Barto and Marcin Kozik.
\newblock Absorbing subalgebras, cyclic terms, and the constraint satisfaction
  problem.
\newblock {\em Logical Methods in Computer Science}, 8, 01 2012.

\bibitem{BartoKozikLoop}
Libor Barto, Marcin Kozik, and Todd Niven.
\newblock The {CSP} dichotomy holds for digraphs with no sources and no sinks
  (a positive answer to a conjecture of {B}ang-{J}ensen and {H}ell).
\newblock {\em SIAM J. Comput.}, 38(5):1782--1802, 2008/09.

\bibitem{PseudoSiggers}
Libor Barto and Michael Pinsker.
\newblock The algebraic dichotomy conjecture for infinite domain constraint
  satisfaction problems.
\newblock In {\em Proceedings of the 31st Annual ACM/IEEE Symposium on Logic in
  Computer Science}, LICS '16, pages 615--622, New York, NY, USA, 2016. ACM.

\bibitem{Bergman}
Clifford Bergman.
\newblock {\em Universal algebra}, volume 301 of {\em Pure and Applied
  Mathematics (Boca Raton)}.
\newblock CRC Press, Boca Raton, FL, 2012.
\newblock Fundamentals and selected topics.

\bibitem{BulatovLoop}
Andrei~A. Bulatov.
\newblock {$H$}-coloring dichotomy revisited.
\newblock {\em Theoret. Comput. Sci.}, 349(1):31--39, 2005.

\bibitem{PseudoLoopConditions}
Pierre Gillibert, Julius~Jonu\v sas, and Michael Pinsker.
\newblock Pseudo-loop conditions.
\newblock Preprint arXiv:1812.00396, 2018.

\bibitem{HellNesetril}
Pavol Hell and Jaroslav Ne{\v{s}}et{\v{r}}il.
\newblock On the complexity of {$H$}-coloring.
\newblock {\em J. Combin. Theory Ser. B}, 48(1):92--110, 1990.

\bibitem{OptimalStrong}
Keith Kearnes, Petar Markovi{\'c}, and Ralph McKenzie.
\newblock Optimal strong {M}al'cev conditions for omitting type 1 in locally
  finite varieties.
\newblock {\em Algebra Universalis}, 72(1):91--100, 2014.

\bibitem{LoopConditions}
Miroslav Ol\v{s}\'ak.
\newblock Loop conditions.
\newblock Preprint arXiv:1701.00260, 2017.

\bibitem{WeakestIdempotent}
Miroslav~Ol\v s\'ak.
\newblock The weakest nontrivial idempotent equations.
\newblock {\em Bulletin of the London Mathematical Society}, 49(6):1028--1047,
  2017.

\bibitem{LoopConditionsDirected}
Miroslav~Ol\v s\'ak.
\newblock Loop conditions with strongly connected graphs.
\newblock Preprint arXiv:1810.03177, 2018.

\end{thebibliography}

\end{document}